\pgfplotsset{compat=newest}
\pgfplotsset{compat=newest}
\newtheorem{theorem}{Theorem}[section]
\newtheorem{remark}[theorem]{Remark}
\newtheorem{assumption}[theorem]{Assumption}
\def\letters{a,b,c,d,e,f,g,h,i,j,k,l,m,n,o,p,q,r,s,t,u,v,w,x,y,z}
\def\Letters{A,B,C,D,E,F,G,H,I,J,K,L,M,N,O,P,Q,R,S,T,U,V,W,X,Y,Z}
\Letters \do{%
  \expandafter\edef\csname\@l bb\endcsname{%
  \noexpand\ensuremath{\noexpand\mathbb{\@l}}}%
  \expandafter\edef\csname\@l bf\endcsname{{\noexpand\bf \@l}}%
  \expandafter\edef\csname\@l cal\endcsname{%
  \noexpand\ensuremath{\noexpand\mathcal{\@l}}}%
  \expandafter\edef\csname\@l eu\endcsname{%
  \noexpand\ensuremath{\noexpand\EuScript{\@l}}}%
  \expandafter\edef\csname\@l frak\endcsname{%
  \noexpand\ensuremath{\noexpand\mathfrak{\@l}}}%
  \expandafter\edef\csname\@l rm\endcsname{{\noexpand\rm \@l}}%
  \expandafter\edef\csname\@l scr\endcsname{%
  \noexpand\ensuremath{\noexpand\mathscr{\@l}}}%
}
\letters \do{%
  \expandafter\edef\csname\@l bf\endcsname{{\noexpand\bf \@l}}%
  \expandafter\edef\csname\@l frak\endcsname{%
  \noexpand\ensuremath{\noexpand\mathfrak{\@l}}}%
  \expandafter\edef\csname\@l scr\endcsname{%
  \noexpand\ensuremath{\noexpand\mathscr{\@l}}}%
}
\definecolor{shadecolor}{rgb}{0.6, 0.6, 0.6} 
\definecolor{darkgreen}{rgb}{0, 0.6, 0}
\newcommand{\isdef}{\mathrel{\mathrel{\mathop:}=}}
\newcommand{\R}{\mathbb{R}}
\newcommand{\N}{\mathbb{N}}
\newcommand{\bs}{\boldsymbol}
\pgfplotsset{compat=1.18}
\definecolor{gold}{RGB}{255,215,0}
\definecolor{magenta_mat}{RGB}{255,0,255}
\begin{document}
 \title[Smoothness analysis via sharp direct and inverse statements]{Sobolev Algorithm for Local Smoothness Analysis (SALSA) via sharp direct and
 inverse statements} %
\author{S.\,Avesani, L.\,Ling, F.\,Marchetti, T.\,Wenzel}

\address{
Sara Avesani,
IDSIA,
USI Lugano,
Via la Santa 1, 6962 Lugano, Svizzera.}
\email{sara.avesani@usi.ch}

\address{
Leevan Ling,
Department of Mathematics, 
Hong Kong Baptist University, 
Kowloon Tong, 
Hong Kong}
\email{lling@hkbu.edu.hk}

\address{
Francesco Marchetti,
University of Padua, 
Via Trieste, 63, 35131 Padova, Italia}
\email{francesco.marchetti@unipd.it}

\address{
Tizian Wenzel,
LMU Munich \& MCML Munich,
Theresienstraße 39, 80333 München, Germany.}
\email[Corresponding author]{wenzel@math.lmu.de}

\begin{abstract}
We extend sharp direct and inverse approximation statements for kernel-based methods for finitely smooth kernels, 
i.e.\ those whose native spaces are norm-equivalent to Sobolev spaces. 
In particular, our inverse results are now formulated for a broad class of approximation schemes beyond interpolation, 
extending existing theory. 

Building on these results, we propose a novel \textit{Sobolev Algorithm for Local Smoothness Analysis (SALSA)} for detecting local smoothness properties of target data, 
including their degree of smoothness and non-smoothness. 
The method is rigorously grounded based on the sharp direct and inverse statements.
Numerical experiments in various settings highlight the effectiveness of the proposed algorithm. \\

\noindent \textbf{Keywords} Kernel-based approximation $\cdot$ direct theorems $\cdot$ inverse theorems $\cdot$ local smoothness detection

\noindent \textbf{Mathematics Subject Classification} Primary 65D15 $\cdot$ 65D05 $\cdot$ 41A05; Secondary 41A27 $\cdot$ 41A25

\end{abstract}

\maketitle

\section{Introduction}

In many applications, one frequently deals with signals that are of irregular nature. 
This can be due to structural properties, as in the popular case of images that naturally exhibit edges and jumps in pixel values. 
Alternatively,
they may arise from noise or missing data, as happens in time-series analysis. 
Consequently, determining the smoothness class of the function that generates the data is crucial for developing effective data-analysis and reconstruction techniques.

In this direction, most of the research focuses on the \textit{edge detection} problem, 
which consists in finding jump discontinuities in images and computer vision applications \cite{Chikmurge18,Shrivakshan12,WilliamThomas15}; 
see the review \cite{Jing22} and references therein.
Indeed, it is well known that the presence of discontinuities gives rise to the so-called \textit{Gibbs phenomenon} in various approximation approaches, 
which causes distortions and oscillations that are not limited to regions near leap points, 
but are likely to extend to significant areas of the domain \cite{Fornberg10,Jerri98,Zygmund03}. 
Well-established techniques in the literature, such as Canny \cite{Canny1986Edge} or Sobel \cite{kittler1983accuracy} detectors, and spatial-frequency methods \cite{MallatZhong1992MultiscaleEdges}, 
usually require the presence of an underlying grid. 
However, in many situations, we need to deal with scattered data, and thus efforts have been made to develop methods that can also treat this setting \cite{BozziniRossini2013DiscontinuityCurves,CramptonMason2005FaultLines,LenarduzziSchaback2017KernelAdaptive}.
In particular, many of such methods exploit kernel-based models, thanks to their flexibility in reconstructing functions at various nodal designs \cite{jung2009iterative,jung2011iterative}. 
In \cite{romani2019edge}, by a Fourier analysis of the cardinal coefficients, the authors showed that the Radial Basis Function (RBF) interpolant coefficients corresponding to the nodes next to the edge points take larger absolute values than those lying in smooth regions, 
and they made use of this observation to derive an RBF-based edge detector. 

Later in \cite{DeMarchiEtAl2020ShapeDriven}, a different edge detection algorithm was constructed by properly thresholding these coefficients and then extrapolating the position of jumps in the domain via Support Vector Machines (SVMs). Other approaches that directly use machine or deep learning methods have also been considered in less and more recent years \cite{BertasiusShiTorresani2015DeepEdge,EtemadChellappa1993NeuralEdge,FloresVidalEtAl2019GlobalEdgeDetection,HeEtAl2019BDCN,LiPaluriRehgDollar2016UnsupervisedEdges}.

Recently, a kernel-free linear-time method for detecting pointwise regularity in non-uniformly sampled multivariate data was introduced in \cite{avesani2025multiresolution}, where the authors exploit a multiresolution analysis tailored to scattered data and analyze the local decay of samplet coefficients, see \cite{harbrecht2022samplets}.

In general, the presence of irregularities of any order in the target function and the convergence properties of reconstruction methods are strongly intertwined. 
For this, \textit{direct statements} quantify the rate of approximation based on smoothness properties of the target function.
Such statements are well studied in the literature, ranging from meshfree approximation \cite{fasshauer2015kernel,wendland2004scattereddata} to the statistical kernel learning context \cite{AlbaniEtAl2016OptimalConvergence,BissantzHohageMunkRuymgaart2007,GuastavinoBenvenuto2020SpectralConvergence}.
Particularly when interpolating with Sobolev kernels, direct statements provide error estimates in various norms, where the achievable rate usually scales with the smoothness of the function to be approximated, and is frequently measured in the so-called \textit{fill distance}.

Also corresponding \textit{inverse statements} have been derived, 
which allow to conclude the smoothness of the function that is approximated based on the obtained convergence rate \cite{HangelbroekEtAl2018Inverse,schaback2002inverse,ward2012lp}.
Recently \cite{wenzel2025sharp} proved sharp inverse estimates for kernel interpolation in bounded domains using the $L_2$-norm,
and thus deriving a one-to-one correspondence between smoothness and approximation rate.

In this paper, we build on these recent theoretical findings and propose a novel meshfree smoothness detection algorithm, which we call the Sobolev Algorithm for Local Smoothness Analysis (SALSA), which is capable of detecting irregularities and also identifying their order. 
For this we consider the \textit{escaping the native space} regime,
i.e.\ the case where the target function with its irregularities is less smooth than the kernel.
Unlike \cite{romani2019edge}, SALSA focuses on local computations and refinements rather than global reconstructions followed by individual coefficient analysis. 
More precisely, our approach assigns the local smoothness class of the function by taking into account an error decay linked to nested sequences of kernel-based interpolants. Novel results obtained both for direct and inverse statements provide robust theoretical guarantees for the proposed algorithm; Table \ref{tab:summer_results} provides an at-a-glance summary of these theoretical advancements.

\begin{table}[h]
\centering
\caption{Overview of theoretical contributions.}
\label{tab:summer_results}
\resizebox{\textwidth}{!}{
\begin{tabular}{
>{\centering\arraybackslash}p{0.8cm}  %
p{3.5cm} 
>{\centering\arraybackslash}p{3.5cm} 
>{\centering\arraybackslash}p{3.5cm} 
p{5cm}
}
\toprule
& \textbf{Result Type} & \textbf{Baseline Result} & \textbf{New Result} & \textbf{Description} \\
\midrule
\multirow{2}{*}{
  \raisebox{-2.2ex}[0pt][0pt]{\rotatebox[origin=c]{90}{$L_p$}}
} 
& Direct Statement
& \Cref{th:error_estimate_L2} from \cite{narcowich2006sobolev}
& \Cref{th:error_estimate_Lq}
& Generalizes the original $L_2$-based bound to arbitrary $L_p$ spaces. \\

& Inverse Statement
& \Cref{thm:l2-inverse-statement} from \cite{wenzel2025sharp}
& \Cref{thm:l2-inverse-statement_strengthened}
& Provides an improved $L_2$ inverse statement by weakening the required assumptions. \\
\addlinespace
\midrule
\multirow{2}{*}{
  \raisebox{-2.2ex}[0pt][0pt]{\rotatebox[origin=c]{90}{RKHS}}
} 
& Direct Statement 
& --- 
& \Cref{th:direct_statement_RKHS}
& Provides a bound on the growth of the RKHS norm. \\

& Inverse Statement
& --- 
& \Cref{thm:RKHS-inverse-statement}
& Extends the inverse analysis to the RKHS norm case. \\
\bottomrule
\end{tabular}
}
\end{table}

The paper is organized as follows:
\Cref{sec:kernel_interpolation} reviews the necessary background information on kernel interpolation.
\Cref{sec:direct_inverse} extends direct statements and provides new and stronger inverse statements.
\Cref{sec:salsa} then introduces the Sobolev Algorithm for Local Smoothness Analysis (SALSA), 
and its use is subsequently numerically illustrated in \Cref{sec:num_results}.
\Cref{sec:conclusion_outlook} concludes the paper and provides an outlook.

\section{Kernel Interpolation}
\label{sec:kernel_interpolation}

Let $ \Omega \subseteq \Rbb^d $ be a bounded domain and \(X=\{{\bs x}_1,\ldots,
{\bs x}_N\}\subset\Omega\) be a set of data sites with cardinality $ N = \#X$. 
Moreover, let $f \in C(\Omega) $ be a target function generating corresponding target data
$ f_1, \dots, f_{N}$. 
Associated to the set of sites $ X $ there is the
\emph{fill-distance} $h_{X, \Omega}$ of $ X $ in $ \Omega $
and the separation radius $q_X$ of $X$ defined as
\begin{align}
\begin{aligned}
\label{eq:fill_sep_dist}
    h_{X,\Omega} &\isdef \sup_{{\bs x} \in \Omega} \min_{{\bs x} \in X}
    \| {\bs x} - {\bs x}_j \|_2, \\
    q_X &\isdef \frac{1}{2} \min_{{\bs x}_i \neq {\bs x}_j \in X} \Vert {\bs x}_i - {\bs x}_j \Vert_2.
\end{aligned}
\end{align}
We call a sequence of sets $(X_n)_{n \in \N} \subset \Omega$ \emph{quasi-uniform}, if the uniformity constant $\rho_{X_n, \Omega} := h_{X_n, \Omega} / q_{X_n} > 0$ is uniformly bounded, i.e. 
\begin{align*}
\sup_{n \in \N} \rho_{X_n, \Omega} < \infty.
\end{align*}
Our first step is to interpolate or approximate the target function $f$ by means of a kernel using only the given data
\(({\bs x}_1,f_1),\ldots,({\bs x}_N,f_N)\).
For this we will make use of a popular class of kernels, namely \emph{radial basis function} (RBF) kernels.
A function \(\Phi\colon\Rbb^d\to\Rbb\) is said to be \emph{radial}, if and only if there
exists a univariate function \(\phi\colon[0,\infty)\to\Rbb\) such that
\(\Phi({\bs x})=\phi(\|{\bs x}\|_2)\). 
The radial function \(\Phi\) is \emph{strictly positive definite}, if and only if
\[
{\bs\alpha}^\intercal[\Phi({\bs\xi}_i-{\bs\xi}_j)]_{i,j=1}^n{\bs\alpha}>0
\quad\text{for all }
{\bs\alpha}\in\Rbb^n\setminus\{{\bs 0}\},
\]
for any choice of pairwise distinct points
\({\bs\xi}_1,\ldots,{\bs\xi}_n\), \(n\in\Nbb\). In this case, the kernel $K\colon\R^d \times \R^d \rightarrow \R$ defined as
\[
K({\bs x_i},{\bs x_j})
\isdef\Phi({\bs x}_i-{\bs x}_j)
\] 
is the reproducing kernel of a uniquely determined Reproducing Kernel
Hilbert Space (RKHS), its \emph{native space}, that we denote by
\(\big(\Ncal_\Phi,\langle\cdot,\cdot\rangle_{\Ncal_\Phi}\big)\).
In this context, we will use the function \(\Phi\) and its associated
kernel \(K\) synonymously.

Under the notion of finitely smooth kernels, one frequently considers kernels whose Fourier transform exhibits algebraic asymptotics, i.e.
\begin{align}\label{eq:algebraicDecaySobolevSpline}
c_\Phi (1 + \| \boldsymbol{\omega} \|_2^2)^{-\tau}
\leq 
\widehat{\Phi}(\boldsymbol{\omega}) 
\leq C_\Phi
(1 + \| \boldsymbol{\omega} \|_2^2)^{-\tau}, 
\quad \boldsymbol{\omega} \in \mathbb{R}^d,
\end{align}
for some positive constants $0 < c_\Phi \leq C_\Phi < \infty$.
Based on \eqref{eq:algebraicDecaySobolevSpline}, one can conclude
that the native space of the corresponding kernel
is norm-equivalent to the classical Sobolev space
$W_2^{\tau}(\mathbb{R}^d) $, see
\cite[Theorem 6.13]{wendland2004scattereddata} for example.
This norm-equivalence extends to bounded domains $\Omega \subset \R^d$ under mild conditions, 
such as a Lipschitz boundary. 

In the following sections, we will focus on such finitely smooth kernels on bounded Lipschitz domains. 
We collect these assumptions as follows:
\begin{assumption}
\label{ass:kernel_domain}
Let $\Omega \subset \R^d$ be a compact Lipschitz region $\Omega \subset \mathbb{R}^d$ and let $\Phi$ be a continuous kernel such that $\Ncal_\Phi(\Omega) \simeq W_2^\tau(\Omega)$ for some $\tau > d/2$. 
\end{assumption}

Popular examples for such strictly positive RBFs are the \emph{Mat\'ern kernels} or \emph{Sobolev splines} 
$\Phi:=\Phi_{{\tau-d/2}}\colon\Rbb^d \to \Rbb $, dependent on a hyper-parameter 
$ {\tau} > d/2 $. 
These kernels are defined via the radial function
$\phi_{{\tau-d/2}}(r) = \frac{2^{1-(\tau-d/2)}}{\Gamma({\tau-d/2})} r^{{\tau-d/2}} K_{{\tau-d/2}} (r)$, $r \geq 0$
where $ \Gamma $ is the Riemann Gamma function and $ K_{{\tau-d/2}} $ is the
modified Bessel function of the second kind, see \cite{MAT} for example. 
The kernel interpolant $I_X f$ of $f$ in $X$ can be written as
\[
(I_X f)(\bs x) = \sum_{i=1}^{N}\alpha_{i}\Phi({\bs x}-{\bs x}_i), \quad \bs x \in \Omega,
\]
with coefficients $\{\alpha_1,\dots, \alpha_n\} \subset \R$ determined by the linear equation system ${I_X f}(x_i) = f_i$ for all $i=1, ,..., N$.
In particular, the interpolant belongs to the trial space
\begin{equation*}
{I_X f} \in \Ncal_\Phi(X) \isdef \operatorname{span} 
\{\Phi(\cdot - {\bs x}):{\bs x}\in X\},
\end{equation*}
which is a finite dimensional subspace of the native space
\begin{equation} \label{native_space}
\Ncal_\Phi(\Omega):=\overline{\operatorname{span} 
\{\Phi(\cdot - {\bs x}):{\bs x}\in \Omega\}}^{\Vert \cdot \Vert_{\Ncal_\Phi(\Omega)}},
\end{equation}
and ${I_X f}$ converges towards the target function $f \in \Ncal_\Phi(X)$ as $ h_{X,\Omega} \to 0$
under mild assumptions on the kernel and the function \cite{iske2025convergence}.

In order to quantify the error between ${I_X f}$ and $f$,
we recall the following \textit{direct statement},
that uses the uniformity constant $\rho_{X, \Omega} = h_{X, \Omega} / q_X$.
The statement is a special case from \cite[Theorem 4.2]{narcowich2006sobolev},
where the slight improvement in the assumptions is due to \cite[Theorem 4.1]{arcangeli2007extension}:

\begin{theorem}[Direct statement $W_2^\mu(\Omega)$]
\label{th:error_estimate_L2}
Under \Cref{ass:kernel_domain},
let $f \in W_2^\beta(\Omega) \supseteq W_2^\tau(\Omega)$ for some $\beta$ such that $d/2 < \beta \leq \tau$.
Then it holds
\begin{align*}
\Vert f - {I_X f} \Vert_{W_2^\mu(\Omega)} \leq C h_{X, \Omega}^{\beta-\mu} \rho_{X, \Omega}^{\tau-\beta} \Vert f \Vert_{W_2^\beta(\Omega)}, \qquad \qquad 0 \leq \mu \leq \lfloor \beta \rfloor - d/2.
\end{align*}
\end{theorem}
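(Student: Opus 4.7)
The plan is to reduce the ``escaping the native space'' situation ($f \in W_2^\beta \supsetneq W_2^\tau$) to the standard native-space case via a band-limited regularization, combined with a sampling (zero-) inequality and an inverse estimate on the trial space. First, I would invoke a Stein extension operator to produce $Ef \in W_2^\beta(\R^d)$ with $\|Ef\|_{W_2^\beta(\R^d)} \leq C \|f\|_{W_2^\beta(\Omega)}$; this is where the Lipschitz boundary assumption in \Cref{ass:kernel_domain} is used. Then I would construct a regularized approximant $f_\sigma$ at bandlimit $\sigma$ (e.g.\ via a smooth Fourier cutoff applied to $Ef$), which by standard Bernstein/Jackson-type inequalities on $\R^d$ satisfies both an approximation estimate $\|Ef - f_\sigma\|_{W_2^\mu(\R^d)} \leq C \sigma^{\mu-\beta} \|Ef\|_{W_2^\beta(\R^d)}$ and an inverse/lifting estimate $\|f_\sigma\|_{W_2^\tau(\R^d)} \leq C \sigma^{\tau-\beta} \|Ef\|_{W_2^\beta(\R^d)}$.

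Next I would use the classical zero lemma for Sobolev spaces (Narcowich--Ward--Wendland, refined by Arcangéli et al.): for $g \in W_2^\tau(\Omega)$ vanishing on a sufficiently fine set $X$,
\[
\|g\|_{W_2^\mu(\Omega)} \leq C\, h_{X,\Omega}^{\tau-\mu}\, \|g\|_{W_2^\tau(\Omega)},
\qquad 0 \leq \mu \leq \lfloor \tau \rfloor - d/2.
\]
Applied to $g = f_\sigma - I_X f_\sigma$ (which vanishes on $X$ and lies in the native space $W_2^\tau(\Omega)$), combined with the minimum-norm property of the kernel interpolant and the lifting estimate from Step~1, this controls $\|f_\sigma - I_X f_\sigma\|_{W_2^\mu(\Omega)}$ by $C h^{\tau-\mu}\sigma^{\tau-\beta}\|f\|_{W_2^\beta(\Omega)}$.

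I would then decompose
\[
f - I_X f \;=\; (f - f_\sigma) \;+\; (f_\sigma - I_X f_\sigma) \;-\; I_X(f - f_\sigma),
\]
where on the left we restrict $f_\sigma$ to $\Omega$. The first term is handled by the approximation estimate. The second is handled as above. For the third term, since $I_X(f - f_\sigma)$ lies in the finite-dimensional trial space $\mathcal{N}_\Phi(X)$, I would use a Bernstein-type inverse inequality on the trial space in terms of the separation radius $q_X$, together with the stability of $I_X$ measured on the data (a sampling inequality going the other direction), which produces a negative power of $q_X$.

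The main obstacle is obtaining precisely the factor $\rho_{X,\Omega}^{\tau-\beta} = (h_{X,\Omega}/q_X)^{\tau-\beta}$ with the correct exponent. This requires calibrating $\sigma$ carefully: choosing $\sigma \sim 1/h_{X,\Omega}$ makes the first two terms contribute $h^{\beta-\mu}\|f\|_{W_2^\beta}$, while the inverse inequality on the trial space inserts a factor of order $q_X^{-(\tau-\beta)}$ relative to the native-space rate, combining to yield $h^{\beta-\mu}\rho^{\tau-\beta}$. Tracking this balance, verifying that the zero lemma is applicable under the hypothesis $\mu \leq \lfloor \beta \rfloor - d/2$ (rather than the stricter $\lfloor \tau \rfloor - d/2$, which is precisely the improvement of \cite{arcangeli2007extension} over the original statement in \cite{narcowich2006sobolev}), and ensuring all constants are independent of $X$ constitute the technical heart of the argument.
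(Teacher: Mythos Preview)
The paper does not give its own proof of this result; it is quoted from \cite{narcowich2006sobolev} with the relaxed hypotheses attributed to \cite{arcangeli2007extension}. The closest argument in the paper is the proof of the $L_q$-extension (\Cref{th:error_estimate_Lq}), which proceeds in two clean steps: apply the sampling/zero inequality at smoothness level $\beta$ directly to $f - I_X f$ (which vanishes on $X$ and lies in $W_2^\beta(\Omega)$), giving $\|f-I_Xf\|_{W_2^\mu}\le Ch^{\beta-\mu}\|f-I_Xf\|_{W_2^\beta}$, and then invoke \cite[Eq.~(12)]{narcowich2006sobolev} as a black box for the stability bound $\|f-I_Xf\|_{W_2^\beta}\le C\rho^{\tau-\beta}\|f\|_{W_2^\beta}$. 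The band-limited construction, the extension operator, and the trial-space stability argument all live entirely inside the proof of that black box.

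Your plan unrolls precisely that NWW argument, which is fine in spirit, but the organization and one parameter choice create a gap. You decompose $f - I_X f$ directly in the $W_2^\mu$-norm and apply the zero lemma only to the middle piece $f_\sigma-I_Xf_\sigma$ at level $\tau$. The third piece $I_X(f-f_\sigma)$, however, is a trial-space element that does \emph{not} vanish on $X$, so no ``native-space rate'' $h^{\tau-\mu}$ is available for it, and the embedding $W_2^\tau\hookrightarrow W_2^\mu$ contributes no power of $h$. With your calibration $\sigma\sim 1/h$, the best one obtains for this term through the NWW stability $\|I_X g\|_{\mathcal N_\Phi}\lesssim q_X^{-(\tau-\beta)}\|g\|_{W_2^\beta}$ is of order $q_X^{-(\tau-\beta)}\|f\|_{W_2^\beta}$, which is strictly weaker than the target $h^{\beta-\mu}\rho^{\tau-\beta}=h^{\tau-\mu}q_X^{-(\tau-\beta)}$ and in fact diverges as the point set is refined. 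In the actual NWW proof one takes $\sigma\sim 1/q_X$ and bounds $\|f-I_Xf\|_{W_2^\beta}$ (not $W_2^\mu$) by $C\rho^{\tau-\beta}\|f\|_{W_2^\beta}$; the factor $h^{\beta-\mu}$ then comes entirely from the zero lemma applied at level $\beta$ to the full error, not from the band-limited splitting.
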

Note that the exponent of $\rho_{X, \Omega}$ is $\tau - \beta$,
while \cite{narcowich2006sobolev} stated $\tau - \mu$, which is a typo.

We remark that a key feature of the previous result lies in the estimation of the convergence of the $L_2(\Omega)$-error (i.e.\ $\mu = 0$) 
when the underlying function does not belong to the native space of the kernel, i.e., it is less smooth.
\Cref{th:error_estimate_L2} provides escaping the native space bounds only for Hilbert space norms (i.e.\ $p=2$).
We extend these bounds in \Cref{th:error_estimate_Lq} to more general $p$-norms.

These direct estimates establish sufficient conditions for convergence: 
for $f \in W_2^\beta(\Omega)$, the interpolant using quasi-uniformly distributed points $X$ satisfies $\|f - {I_X f}\|_{L_2(\Omega)} \leq C h_{X,\Omega}^\beta$. 
In contrast, inverse theorems prove the contrary:
If $\|f - {I_X f}\|_{L_2(\Omega)} \leq Ch_{X,\Omega}^\beta$ holds uniformly for some $\beta > 0$, 
then necessarily $f \in H^{\beta - \varepsilon}(\Omega)$ for any $\varepsilon > 0$.
This is made precise in the following theorem, taken from \cite[Theorem 1]{wenzel2025sharp}:
\begin{theorem}[Inverse statement]
\label{thm:l2-inverse-statement}
Under \Cref{ass:kernel_domain}, 
consider $f \in C(\Omega)$ such that for some constants $c_f, \beta, h_0 > 0$ it holds
\begin{equation}\label{eq:l2-inverse-statement}
\|f - {I_X f}\|_{L_2(\Omega)} \leq c_f h_{X,\Omega}^\beta
\end{equation}
for all $X \subset \Omega$ with $h_{X,\Omega} \leq h_0$ and  $\rho_{X, \Omega} \leq 44$.
If $\beta \in (0, \tau]$, then $f \in H^{\beta'}(\Omega)$ for all $\beta' \in (0, \beta)$. If $\beta > \tau$, then $f \in \Ncal_\Phi(\Omega) \simeq W_2^\tau(\Omega)$.
\end{theorem}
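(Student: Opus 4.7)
The plan is to reconstruct $f$ through a nested sequence of kernel interpolants and then control their smoothness via an inverse (Bernstein-type) inequality on the trial spaces. Concretely, I would fix a nested sequence of quasi-uniform point sets $X_0 \subset X_1 \subset \cdots \subset \Omega$ with fill distances $h_n \asymp 2^{-n} h_0$ and $\rho_{X_n,\Omega} \leq 44$ (this is where the constant in the hypothesis is used; a dyadic thinning/halving construction in a Lipschitz domain provides such sets). Let $g_n := I_{X_n} f$. The hypothesis then yields
\begin{equation*}
\|f - g_n\|_{L_2(\Omega)} \leq c_f h_n^\beta,
\qquad
\|g_{n+1} - g_n\|_{L_2(\Omega)} \leq 2 c_f h_n^\beta,
\end{equation*}
so the sequence $g_n$ converges to $f$ in $L_2(\Omega)$.

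Next I would invoke a Bernstein-type estimate on $V_{X_{n+1}} = \operatorname{span}\{\Phi(\cdot - \bs x) : \bs x \in X_{n+1}\}$ of the form $\|v\|_{H^s(\Omega)} \lesssim q_{X_{n+1}}^{-s} \|v\|_{L_2(\Omega)}$ for $v \in V_{X_{n+1}}$, valid for $0 \leq s \leq \tau$; such an inequality follows from the norm equivalence $\mathcal{N}_\Phi(\Omega) \simeq W_2^\tau(\Omega)$ together with a stability bound on the Lagrange/cardinal functions in $V_{X_{n+1}}$, using quasi-uniformity to replace $q_{X_{n+1}}$ by $h_{n+1}$ up to constants depending only on $44$. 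Applying this to the difference $g_{n+1} - g_n \in V_{X_{n+1}}$ gives
\begin{equation*}
\|g_{n+1} - g_n\|_{H^s(\Omega)} \lesssim h_n^{-s}\, \|g_{n+1} - g_n\|_{L_2(\Omega)} \lesssim h_n^{\beta - s}.
\end{equation*}

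For the case $\beta \leq \tau$, fix any $\beta' \in (0, \beta)$. The above gives $\|g_{n+1} - g_n\|_{H^{\beta'}(\Omega)} \lesssim 2^{-n(\beta - \beta')}$, which is summable, so $\|g_n\|_{H^{\beta'}(\Omega)}$ is uniformly bounded. By the reflexivity of $H^{\beta'}(\Omega)$, a weakly convergent subsequence exists, and its weak limit must coincide with the $L_2$-limit $f$; hence $f \in H^{\beta'}(\Omega)$. For the case $\beta > \tau$, I would use the additional structure that for nested sets the interpolants satisfy the Pythagorean identity $\|g_{n+1}\|_{\mathcal{N}_\Phi}^2 = \|g_n\|_{\mathcal{N}_\Phi}^2 + \|g_{n+1} - g_n\|_{\mathcal{N}_\Phi}^2$ (since $g_{n+1} - g_n$ is $\mathcal{N}_\Phi$-orthogonal to $V_{X_n}$, as $I_{X_n}$ is the $\mathcal{N}_\Phi$-orthogonal projection). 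Combining with the Bernstein estimate at $s = \tau$, one obtains $\|g_{n+1} - g_n\|_{\mathcal{N}_\Phi} \lesssim h_n^{\beta - \tau}$, which is summable because $\beta > \tau$. Thus $\|g_n\|_{\mathcal{N}_\Phi}$ converges, and the Cauchy property in $\mathcal{N}_\Phi \simeq W_2^\tau(\Omega)$ identifies $f$ as the limit.

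The main obstacle, I expect, is securing the Bernstein inequality on $V_{X_n}$ with constants independent of $n$ and explicitly tied to the uniformity bound $44$: naive derivations route through $\mathcal{N}_\Phi$-stability of the interpolation operator and typically lose a factor that depends on $h_n$ in the wrong way, so the argument must carefully exploit quasi-uniformity together with either a sampling inequality or a local polynomial reproduction argument (Narcowich--Ward--Wendland style). Once this inequality is in hand with the correct $q_X^{-s}$ scaling, the telescoping above is essentially mechanical, and the threshold $\beta = \tau$ appears naturally as the borderline at which the geometric series over $n$ stops converging in the weaker $H^s$ norms and must be lifted to the native-space norm.
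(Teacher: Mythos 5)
Your argument is essentially the route the paper itself takes: the paper does not reprove this theorem (it is quoted from \cite{wenzel2025sharp}), but its proof of the strengthened version \Cref{thm:l2-inverse-statement_strengthened} is exactly your telescoping-plus-Bernstein scheme, and your proposal in fact establishes that stronger statement, since you only use the decay hypothesis along a single nested quasi-uniform sequence rather than for all admissible $X$. The individual steps are sound: the increments $g_{n+1}-g_n$ lie in $V_{X_{n+1}}$ by nestedness, the Bernstein bound with $q_{X_{n+1}}^{-s}\lesssim h_n^{-s}$ yields a summable geometric series for $s=\beta'<\beta$ (and for $s=\tau$ when $\beta>\tau$), and the identification of the limit with $f$ via $L_2$ convergence is correct. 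Your weak-compactness argument could be replaced by the simpler Cauchy-sequence argument the paper uses, and the Pythagorean identity in the case $\beta>\tau$ is superfluous, since summability of the increments already gives the Cauchy property in $\mathcal{N}_\Phi(\Omega)$.

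The one place you are too casual is the Bernstein inequality itself, which is the real content of the argument. It does not follow from the norm equivalence plus Lagrange-function stability in any straightforward way: stability of $I_X$ in $\mathcal{N}_\Phi(\Omega)$ gives bounds of the form $\Vert I_X g\Vert_{H^\tau(\Omega)}\lesssim \Vert g\Vert_{H^\tau(\Omega)}$, not the inverse estimate $\Vert v\Vert_{H^\tau(\Omega)}\lesssim q_X^{-\tau}\Vert v\Vert_{L_2(\Omega)}$ on the trial space of a bounded domain. On bounded Lipschitz domains this is a recent and genuinely nontrivial result (\cite[Theorem 4.9]{zhengjie2025inverse}, extended to all $s\in[0,\tau]$ in \Cref{th:bernstein}); its prior unavailability is precisely why the original theorem in \cite{wenzel2025sharp} had to assume decay over \emph{all} quasi-uniform point sets rather than a single sequence. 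You should cite that result rather than sketch a derivation of it; with that citation in place the proof is complete.
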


Thus, 
\Cref{th:error_estimate_L2} and \Cref{thm:l2-inverse-statement} together yield a complete one-to-one correspondence between the decay rate and the smoothness,
thus establishing a powerful analytical tool for smoothness detection.

Nevertheless, we further strengthen the inverse result of \Cref{thm:l2-inverse-statement} by weakening its assumptions, which yields \Cref{thm:l2-inverse-statement_strengthened}.
This is made possible by the recent results \cite[Theorem 4.9]{zhengjie2025inverse},
which states the Bernstein inequality
\begin{align}
\label{eq:bernstein_old}
\Vert u \Vert_{H^\beta(\Omega)} \leq C q_{X}^{-\beta} \Vert u \Vert_{L_2(\Omega)}
\end{align}
for some constant $C = C_{d, \kappa, \tau, \Omega} > 0$ and any trial functions $u \in \Ncal_\Phi(X)$ for $X \subset \Omega$,
as soon as $\beta$ satisfies $d/2 < \beta \leq \tau$ or $0 \leq \beta \leq \lfloor \tau \rfloor$.
In \Cref{th:bernstein}, 
we will generalize this Bernstein inequality to any $0 \leq \beta \leq \tau$,
and then later on leverage it to prove \Cref{thm:l2-inverse-statement_strengthened}.

\section{Direct and inverse statements}
\label{sec:direct_inverse}

In this section, we provide some theoretical improvements on the direct and inverse statements stated before.
First, we extend the direct statement error bounds of \Cref{th:error_estimate_L2} from the $L_2(\Omega)$ norm to general $L_p(\Omega)$ norms,
which is formulated in \Cref{th:error_estimate_Lq}.
Second, we strengthen the inverse statement of \Cref{thm:l2-inverse-statement} by weakening its assumptions, 
which is formulated in \Cref{thm:l2-inverse-statement_strengthened}.
Finally we also provide inverse statements using the $\Ncal_\Phi(\Omega)$-norm,
which is given in \Cref{thm:RKHS-inverse-statement}.

As both the two inverse statements rely on an approximation using well distributed points,
we formulate this assumption on the point sequence in the following assumption:

\begin{assumption}
\label{ass:points}
Let $(X_n)_{n \in \N} \subset \Omega$ be a nested sequence of quasi-uniformly distributed point sets with geometrically decaying fill distance,
i.e.\ it holds 
\begin{align}
\label{eq:assumption_decay_fill_dist}
c_0' a^n \leq q_{X_n} \leq h_{X_n, \Omega} \leq c_0 a^n
\end{align}
for constants $c_0, c_0' > 0$ and $a \in (0, 1)$.
\end{assumption}

Note that in the following, we do not track constants explicitly, in particular constants within proofs may change from line to line.

\subsection{Extended direct statement}

We start by generalizing \Cref{th:error_estimate_L2} from $W_2^\mu(\Omega)$ norms to more general $W_q^\mu(\Omega)$ norms.
The idea is to provide direct statements using non-Hilbert norms,
and later complement them with the corresponding inverse statements 
that we were, however, not able to obtain, see the discussion at the end of \Cref{subsec:strengthened_invers}.

\begin{theorem}[Direct statement $L_q(\Omega)$]
\label{th:error_estimate_Lq}
Under \Cref{ass:kernel_domain},
consider $\beta$ such that $\tau \geq \beta \geq \lfloor \beta \rfloor > d/2$.
Then it holds for every $f \in W_2^\beta(\Omega)$ and $q\in [1, \infty]$ that
\begin{align*}
\Vert f - {I_X f} \Vert_{W_q^{\mu}(\Omega)} \leq C h_{X,\Omega}^{\beta-\mu-d(1/2-1/q)_+} \rho_{X, \Omega}^{\tau-\beta} \Vert f \Vert_{W_2^\beta(\Omega)}, \qquad \qquad 0 \leq  \mu < \lfloor \beta \rfloor - d/2
\end{align*}
and in particular ($\mu=0$):
\begin{align*}
\Vert f - {I_X f} \Vert_{L_q(\Omega)} \leq C h_{X,\Omega}^{\beta-d(1/2-1/q)_+} \rho_{X, \Omega}^{\tau - \beta} \Vert f \Vert_{W_2^\beta(\Omega)}.
\end{align*}
\end{theorem}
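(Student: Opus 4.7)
The plan is to reduce the $W_q^\mu$ estimate to the already-available $W_2^\mu$ version in \Cref{th:error_estimate_L2} by means of standard Sobolev embeddings on the bounded Lipschitz domain $\Omega$. The argument splits naturally according to whether $q$ is at most the Hilbert exponent $2$ or strictly larger.

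For $q \in [1,2]$ the correction $d(1/2 - 1/q)_+$ vanishes, and the embedding $W_2^\mu(\Omega) \hookrightarrow W_q^\mu(\Omega)$ follows from H\"older's inequality with a constant depending only on $|\Omega|$ and $q$. Plugging this into \Cref{th:error_estimate_L2} gives the claim immediately. For $q \in (2, \infty]$, I would set $\sigma := d(1/2 - 1/q)$ and invoke the fractional Sobolev embedding $W_2^{\mu + \sigma}(\Omega) \hookrightarrow W_q^\mu(\Omega)$, which is classical on bounded Lipschitz domains. Applying \Cref{th:error_estimate_L2} with $\mu$ replaced by $\mu + \sigma$ then yields
\begin{align*}
\Vert f - I_X f \Vert_{W_q^\mu(\Omega)} \;\leq\; C \Vert f - I_X f \Vert_{W_2^{\mu + \sigma}(\Omega)} \;\leq\; C h_{X,\Omega}^{\beta - \mu - \sigma} \rho_{X,\Omega}^{\tau - \beta} \Vert f \Vert_{W_2^\beta(\Omega)},
\end{align*}
which exactly matches the claimed exponent $\beta - \mu - d(1/2-1/q)_+$. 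The special case $\mu = 0$ is recovered by specialization.

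The main obstacle I anticipate lies in the admissibility range in \Cref{th:error_estimate_L2}: applying it with shifted exponent $\mu + \sigma$ requires $\mu + \sigma \leq \lfloor \beta \rfloor - d/2$, whereas the present statement only imposes $\mu < \lfloor \beta \rfloor - d/2$. For $q$ close to $\infty$ the embedding reduction thus does not quite close on its own. The remedy is to replace the embedding step by a direct $W_q^\mu$-sampling inequality for functions vanishing on $X$ in the spirit of Narcowich--Ward--Wendland, applied to the residual $u := f - I_X f$, and then to control $\Vert u \Vert_{W_2^\beta(\Omega)}$ via the triangle inequality together with the escape-the-native-space stability $\Vert I_X f \Vert_{W_2^\beta(\Omega)} \leq C \rho_{X,\Omega}^{\tau - \beta} \Vert f \Vert_{W_2^\beta(\Omega)}$. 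This sampling-inequality route is the technically more delicate part, since one must propagate the uniformity factor $\rho_{X,\Omega}^{\tau - \beta}$ cleanly through the sampling step while preserving the optimal $h$-exponent $\beta - \mu - d(1/2 - 1/q)_+$.
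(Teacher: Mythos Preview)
Your ``remedy'' is precisely the paper's proof. The authors bypass the embedding detour entirely and go straight to the sampling-inequality route: they apply the $W_q^m$ zeros lemma \cite[Theorem~2.2]{gia2006continuous} to $u = f - I_X f$ for integer $m \leq \lfloor\beta\rfloor - d/2$, obtaining
\[
\Vert f - I_X f \Vert_{W_q^m(\Omega)} \;\leq\; C\, h_{X,\Omega}^{\,\beta - m - d(1/2-1/q)_+}\, \Vert f - I_X f \Vert_{W_2^\beta(\Omega)},
\]
and then bound the right-hand norm by $C\,\rho_{X,\Omega}^{\tau-\beta}\Vert f\Vert_{W_2^\beta(\Omega)}$ via \cite[Eq.~(12)]{narcowich2006sobolev}. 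General $\mu \in [0,\lfloor\beta\rfloor - d/2)$ is then obtained by interpolating between $m=0$ and $m=\lfloor\beta\rfloor - d/2$, a step your outline does not mention.

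Your concern that propagating the factor $\rho_{X,\Omega}^{\tau-\beta}$ through the sampling step is ``technically delicate'' is overstated: the $h$-exponent and the $\rho$-exponent arise from two \emph{decoupled} inequalities (the sampling inequality involves no $\rho$ whatsoever), so there is nothing to reconcile. Your triangle-inequality variant for the $W_2^\beta$ bound also works, since $\rho_{X,\Omega}\geq 1$ and $\tau\geq\beta$ force $1\leq\rho_{X,\Omega}^{\tau-\beta}$; the paper simply quotes the residual bound directly rather than going through the interpolant stability.
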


\begin{proof}
The proof is along the lines of the the proof \cite[Theorem 4.2]{narcowich2006sobolev},
however we apply a more general sampling inequality in the very first step.
The function $f - {I_X f}$ vanishes on $X$, thus by \cite[Theorem 2.2]{gia2006continuous} we have
\begin{align*}
\Vert f - {I_X f} \Vert_{W_q^{m}(\Omega)} \leq C h^{\beta - m - d(1/2 - 1/q)_+} \Vert f - {I_X f} \Vert_{W_2^\beta(\Omega)}, 
\qquad \qquad m \leq \lfloor \beta \rfloor - d/2.
\end{align*}
It remains to estimate the right-hand side norm $\Vert f - {I_X f} \Vert_{W_2^\beta(\Omega)}$,
which can be done exactly as in \cite[Proof of Theorem 4.2]{narcowich2006sobolev} and yields the bound \cite[Eq.~(12)]{narcowich2006sobolev}
\begin{align*}
\Vert f - {I_X f} \Vert_{W_2^\beta(\Omega)} \leq C \rho_{X, \Omega}^{\tau - \beta} \Vert f \Vert_{W_2^\beta(\Omega)}.
\end{align*}
Thus both these inequalities together give 
\begin{align*}
\Vert f - {I_X f} \Vert_{W_q^{m}(\Omega)} \leq C h^{\beta - m - d(1/2 - 1/q)_+} \rho_{X, \Omega}^{\tau - \beta} \Vert f \Vert_{W_2^\beta(\Omega)}.
\end{align*}
Interpolating between $m=0$ and $m=\lfloor \beta \rfloor - d/2$ gives the general result depending on $\mu$. %
\end{proof}

Next we state a kind of direct statement using the $\Ncal_\Phi(\Omega)$ norm.
Since we consider $f \notin \Ncal_\Phi(\Omega)$,
we cannot consider $\Vert f - I_{X_n}f \Vert_{\Ncal_\Phi(\Omega)}$ 
and instead consider $\Vert I_{X_{n}}f - I_{X_{n+1}}f \Vert_{\Ncal_\Phi(\Omega)}$,
for which we provide a bound on its growth.

\begin{theorem}[Direct statement --- $\Ncal_\Phi(\Omega)$-version]
\label{th:direct_statement_RKHS}
Under \Cref{ass:kernel_domain} and \Cref{ass:points}, 
consider $f \in W^\beta_2(\Omega)$.
Then it holds
\begin{align*}
\Vert I_{X_{n}}f - I_{X_{n+1}}f \Vert_{\Ncal_\Phi(\Omega)} \leq C h_{X_n,\Omega}^{\beta - \tau} \Vert f \Vert_{W_2^\beta(\Omega)}
\end{align*}
as well as
\begin{align*}
\Vert I_{X_{n}}f \Vert_{\Ncal_\Phi(\Omega)} \leq C' h_{X_n,\Omega}^{\beta - \tau}
\end{align*}
\end{theorem}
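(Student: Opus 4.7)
The natural approach is to exploit that $I_{X_n}f - I_{X_{n+1}}f \in \Ncal_\Phi(X_{n+1})$ (by the nesting $X_n \subset X_{n+1}$), to combine the Bernstein inequality with the direct statement of \Cref{th:error_estimate_L2}, and then to telescope for the second bound. For the first inequality, I would apply the Bernstein inequality \cite[Theorem 4.9]{zhengjie2025inverse} with $\beta = \tau$ (admissible since $d/2 < \tau \leq \tau$), together with the norm-equivalence $\Ncal_\Phi(\Omega) \simeq W_2^\tau(\Omega)$ from \Cref{ass:kernel_domain}, to obtain
\begin{align*}
\Vert I_{X_n}f - I_{X_{n+1}}f \Vert_{\Ncal_\Phi(\Omega)} \leq C\, q_{X_{n+1}}^{-\tau}\, \Vert I_{X_n}f - I_{X_{n+1}}f \Vert_{L_2(\Omega)}.
\end{align*}
Then I would control the $L_2$-norm on the right via the triangle inequality and the direct statement (\Cref{th:error_estimate_L2} with $\mu=0$); quasi-uniformity absorbs $\rho_{X_n,\Omega}^{\tau-\beta}$ into a constant, yielding
\begin{align*}
\Vert I_{X_n}f - I_{X_{n+1}}f \Vert_{L_2(\Omega)} \leq C\, h_{X_n,\Omega}^\beta\, \Vert f \Vert_{W_2^\beta(\Omega)}.
\end{align*}
Finally, \Cref{ass:points} gives $q_{X_{n+1}} \geq c_0' a^{n+1} \geq C h_{X_n,\Omega}$, so $q_{X_{n+1}}^{-\tau} \leq C h_{X_n,\Omega}^{-\tau}$, which combined with the previous estimate yields the first claim.

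For the second estimate, I would telescope $I_{X_n}f = I_{X_0}f + \sum_{k=0}^{n-1}(I_{X_{k+1}}f - I_{X_k}f)$ and bound each summand by the inequality just established. The resulting sum
\begin{align*}
\sum_{k=0}^{n-1} h_{X_k,\Omega}^{\beta-\tau} \leq C \sum_{k=0}^{n-1} a^{-k(\tau-\beta)}
\end{align*}
is geometric with ratio $a^{-(\tau-\beta)} > 1$ (since $\beta < \tau$ and $a \in (0,1)$), hence dominated by its last term $\sim a^{-(n-1)(\tau-\beta)}$, which in turn is $\leq C h_{X_n,\Omega}^{\beta-\tau}$ by using both bounds in \eqref{eq:assumption_decay_fill_dist}. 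The base term $\Vert I_{X_0}f \Vert_{\Ncal_\Phi(\Omega)}$ is a finite constant depending only on $f$ and $X_0$, which is absorbed into $C'$ since $h_{X_n,\Omega}^{\beta-\tau}$ is monotonically non-decreasing in $n$ and is bounded below by a positive constant.

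I expect the main technical point to be the application of the Bernstein inequality at the endpoint exponent $\beta = \tau$: this relies on the sharp result \cite[Theorem 4.9]{zhengjie2025inverse} (or equivalently on the strengthened \Cref{th:bernstein}), and it is essential that the difference $I_{X_n}f - I_{X_{n+1}}f$ actually lies in the trial space $\Ncal_\Phi(X_{n+1})$, which crucially uses the nesting of the point sets. The remainder of the argument is careful bookkeeping of constants and exploitation of the quasi-uniform geometric decay from \Cref{ass:points}.
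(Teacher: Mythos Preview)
Your proposal is correct and follows essentially the same route as the paper: apply the Bernstein inequality at the endpoint $\beta=\tau$ to the difference $I_{X_n}f - I_{X_{n+1}}f \in \Ncal_\Phi(X_{n+1})$, bound the resulting $L_2$-norm via the triangle inequality and the direct statement, convert $q_{X_{n+1}}^{-\tau}$ to $h_{X_n,\Omega}^{-\tau}$ using quasi-uniformity, and then telescope and sum the geometric series for the second bound. Your handling of the geometric sum (ratio $a^{-(\tau-\beta)}>1$, dominated by its last term) is in fact cleaner than the paper's, which contains a sign typo at that step.
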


\begin{proof}
Applying the Bernstein inequality in \Cref{th:bernstein} with $\beta=\tau$ to $I_{X_n} f - I_{X_{n+1}} f$
and estimating the right hand side with help of \Cref{th:error_estimate_Lq} gives the statement:
\begin{align*}
\Vert I_{X_{n}}f - I_{X_{n+1}}f \Vert_{\Ncal_\Phi(\Omega)} \leq&~ Cq_{X_{n+1}}^{-\tau} \Vert I_{X_{n}}f - I_{X_{n+1}}f \Vert_{L_2(\Omega)} \\
\leq&~ Cq_{X_{n+1}}^{-\tau} \left( \Vert f - I_{X_n}f \Vert_{L_2(\Omega)} + \Vert f - I_{X_{n+1}}f \Vert_{L_2(\Omega)} \right) \\
\leq&~ Cq_{X_{n+1}}^{-\tau} h_{X_n,\Omega}^\beta \Vert f \Vert_{W_2^\beta(\Omega)} \\
\leq&~ C h_{X_n,\Omega}^{\beta - \tau} \Vert f \Vert_{W_2^\beta(\Omega)}.
\end{align*}
This then yields
\begin{align*}
\Vert I_{X_{n}}f \Vert_{\Ncal_\Phi(\Omega)} \leq&~ \sum_{\ell=2}^n \Vert I_{X_\ell} f - I_{X_{\ell-1}} f \Vert + \Vert I_{X_1} f \Vert_{\Ncal_\Phi(\Omega)} \\
\leq&~ \Vert I_{X_1} f \Vert_{\Ncal_\Phi(\Omega)} + C \sum_{\ell=2}^n h_{X_\ell, \Omega}^{\beta-\tau} \Vert f \Vert_{W_2^\beta(\Omega)} \\
\leq&~ \Vert I_{X_1} f \Vert_{\Ncal_\Phi(\Omega)} + C \sum_{\ell=2}^n a^{\ell(\beta-\tau)} \Vert f \Vert_{W_2^\beta(\Omega)}.
\end{align*}
In the last step, \Cref{ass:points} was applied.
The remaining parts can be further estimated using the geometric sum formula, recalling $a < 1$ and $\beta - \tau > 0$, and reapplying \Cref{ass:points}.
So that we end up with
\begin{align*}
\Vert I_{X_{n}}f \Vert_{\Ncal_\Phi(\Omega)} \leq&~ \Vert I_{X_1} f \Vert_{\Ncal_\Phi(\Omega)} + C h_{X_n, \Omega}^{\beta-\tau} \Vert f \Vert_{W_2^\beta(\Omega)} \\
&\leq C' h_{X_n, \Omega}^{\beta-\tau}.
\end{align*}
\end{proof}

\subsection{Strengthened and new inverse statement}
\label{subsec:strengthened_invers}

We begin this section by slightly generalizing the Berstein inequality \cite[Theorem 4.9]{zhengjie2025inverse} stated in \eqref{eq:bernstein_old},
which was not proven for small gap for the case that $\lfloor \tau \rfloor < d/2$.
This gap can be closed, such that the Bernstein inequality holds for $\beta \in [0, \tau]$:
\begin{theorem}
\label{th:bernstein}
Under \Cref{ass:kernel_domain},
let $\beta \in [0, \tau]$.
Then there exists a constant $C = C_{d, \kappa, \tau, \Omega} > 0$ such that the following Bernstein inverse inequality holds
\begin{align}
\label{eq:bernstein}
\Vert u \Vert_{H^\beta(\Omega)} \leq C q_{X}^{-\beta} \Vert u \Vert_{L_2(\Omega)}
\end{align}
for all point sets $X \subset \Omega$ and all trial functions $u \in \Ncal_\Phi(X)$.
\end{theorem}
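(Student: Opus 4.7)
The plan is to close the gap in \cite[Theorem 4.9]{zhengjie2025inverse} by a Sobolev space interpolation argument. The existing inequality \eqref{eq:bernstein_old} already covers $\beta \in [0, \lfloor\tau\rfloor]\cup(d/2, \tau]$, so the only range to handle is $\beta \in (\lfloor\tau\rfloor, d/2]$, which is non-empty only when $\lfloor\tau\rfloor < d/2$ (e.g.\ $d=3$ and $\tau\in(d/2, 2)$). On this range I would interpolate between a trivial endpoint and a strong endpoint at which the Bernstein inequality is already known.

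First, I pick any $\beta_1 \in (d/2,\tau]$ (for concreteness $\beta_1=\tau$). By \eqref{eq:bernstein_old} we have
\[
\Vert u \Vert_{H^{\beta_1}(\Omega)} \leq C\, q_X^{-\beta_1}\Vert u\Vert_{L_2(\Omega)} \qquad \text{for all } u \in \Ncal_\Phi(X),
\]
while the trivial estimate $\Vert u\Vert_{L_2(\Omega)} \leq \Vert u\Vert_{L_2(\Omega)}$ serves as the lower endpoint at order $0$. Next, I invoke the standard interpolation inequality for fractional Sobolev spaces on a bounded Lipschitz domain: for any $\beta = \theta \beta_1$ with $\theta\in[0,1]$,
\[
\Vert u \Vert_{H^{\beta}(\Omega)} \leq C_\Omega\, \Vert u\Vert_{L_2(\Omega)}^{1-\theta}\,\Vert u\Vert_{H^{\beta_1}(\Omega)}^{\theta}.
\]
Combining these two bounds yields, for every $\beta\in(\lfloor\tau\rfloor, d/2]$,
\[
\Vert u \Vert_{H^{\beta}(\Omega)} \leq C_\Omega \, \Vert u\Vert_{L_2(\Omega)}^{1-\theta}\bigl(C q_X^{-\beta_1} \Vert u\Vert_{L_2(\Omega)}\bigr)^{\theta} = C'\, q_X^{-\beta}\,\Vert u\Vert_{L_2(\Omega)},
\]
which is exactly \eqref{eq:bernstein} on the previously missing range. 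Together with \eqref{eq:bernstein_old}, this establishes the inequality for all $\beta\in[0,\tau]$.

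The main obstacle is justifying the interpolation identity on $\Omega$ for non-integer smoothness, since fractional Sobolev spaces on irregular domains are not defined as intrinsically as on $\R^d$. On $\R^d$ the identity $(L_2(\R^d), H^{\beta_1}(\R^d))_{\theta,2} = H^{\theta\beta_1}(\R^d)$ is immediate from the Fourier-based definition of $H^s$ and the classical real interpolation for weighted $L_2$ spaces. To transfer this to $\Omega$, I would appeal to a universal extension operator on Lipschitz domains, such as the Stein or Rychkov extension, which is bounded as $H^s(\Omega)\to H^s(\R^d)$ for all $s$ in a uniform range. Composing extension with restriction then transports the interpolation inequality from $\R^d$ to $\Omega$ with constants independent of the particular $\beta$ considered, which completes the argument.
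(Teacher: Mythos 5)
Your argument is correct and coincides with the paper's own proof: both interpolate between the trivial $L_2(\Omega)$ bound and the known Bernstein inequality at the endpoint $\beta_1=\tau$ (valid since $\tau>d/2$) via the Gagliardo--Nirenberg/Sobolev interpolation inequality on the Lipschitz domain $\Omega$, yielding $\Vert u\Vert_{H^\beta(\Omega)}\leq C q_X^{-\beta}\Vert u\Vert_{L_2(\Omega)}$ for all $\beta\in[0,\tau]$. Your additional discussion of justifying the interpolation identity on $\Omega$ through a universal extension operator is a welcome extra layer of care, but the route is the same.
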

\begin{proof}
By Gagliardo-Nirenberg interpolation inequality and subsequent use of \cite[Theorem 4.9]{zhengjie2025inverse} for $\beta = \tau$ we obtain (constants might change from line to line)
\begin{align*}
\Vert u \Vert_{H^\beta(\Omega)} &\leq C \Vert u \Vert_{L_2(\Omega)}^{1-\beta/\tau} \cdot \Vert u \Vert_{H^\tau(\Omega)}^{\beta/\tau} \\
&\leq C \Vert u \Vert_{L_2(\Omega)}^{1-\beta/\tau} \cdot \left( Cq_X^{-\tau} \Vert u \Vert_{L_2(\Omega)} \right)^{\beta/\tau} \\
&= Cq_X^{-\beta} \cdot \Vert u \Vert_{L_2(\Omega)}.
\end{align*}
\end{proof}

The next theorem strengthens \Cref{thm:l2-inverse-statement} by both weakening its assumptions as well as extending from kernel interpolation to general kernel approximations:

The assumption is weakened, as we now only require a single sequence of sets of quasi-uniformly distributed points that provides an error decay instead of the assumption that we are given the error decay for any set of quasi-uniformly distributed points.
This improvement is made possible by utilizing a previously unavailable Bernstein inequality from \cite{zhengjie2025inverse},
which we slightly generalized in \Cref{th:bernstein}.
The use of general kernel approximation methods (such as regularized approximation, least squares or Galerkin) also allows to start with $f \in L_2(\Omega)$ instead of $f \in \mathcal{C}(\Omega)$.

\begin{theorem}[Inverse statement — strengthened $L_2(\Omega)$-version]
\label{thm:l2-inverse-statement_strengthened}
Under \Cref{ass:kernel_domain} and \Cref{ass:points},
let $f \in L_2(\Omega)$ and assume there exists a sequence of point based approximants, i.e.\ $(u_{X_n})_{n \in \N} \subset \Ncal_\Phi(\Omega)$ with $u_{X_n} \in \Ncal_\Phi(X_n)$ such that
\begin{align}
\label{eq:l2-inverse-statement_strengthened}
\Vert f - u_{X_n} \Vert_{L_2(\Omega)} \leq c_f h_{X_n,\Omega}^\beta
\end{align}
holds for some $c_f >0$ and $\beta \in (0, \tau]$.
Then $f \in H^{\beta'}(\Omega)$ for all $\beta' \in (0, \beta)$. 
\end{theorem}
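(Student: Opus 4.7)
The plan is to run the classical telescoping argument in $H^{\beta'}(\Omega)$, using the new Bernstein inequality from \Cref{th:bernstein} to convert the given $L_2$ error decay into control of Sobolev norms on the nested trial spaces.

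First, I would write the telescoping identity
\begin{align*}
u_{X_n} = u_{X_1} + \sum_{k=1}^{n-1} \bigl( u_{X_{k+1}} - u_{X_k} \bigr),
\end{align*}
and observe that since the sequence $(X_n)$ is nested by \Cref{ass:points}, we have $\mathcal{N}_\Phi(X_k) \subset \mathcal{N}_\Phi(X_{k+1})$, so each increment $u_{X_{k+1}} - u_{X_k}$ lies in $\mathcal{N}_\Phi(X_{k+1})$. This is exactly the setup required to apply the Bernstein inequality in \Cref{th:bernstein} with any exponent $\beta' \in (0, \beta) \subset (0, \tau]$, yielding
\begin{align*}
\Vert u_{X_{k+1}} - u_{X_k} \Vert_{H^{\beta'}(\Omega)} \leq C\, q_{X_{k+1}}^{-\beta'} \Vert u_{X_{k+1}} - u_{X_k} \Vert_{L_2(\Omega)}.
\end{align*}

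Next I would estimate the right-hand side using the triangle inequality together with the assumption \eqref{eq:l2-inverse-statement_strengthened}:
\begin{align*}
\Vert u_{X_{k+1}} - u_{X_k} \Vert_{L_2(\Omega)} \leq \Vert f - u_{X_k} \Vert_{L_2(\Omega)} + \Vert f - u_{X_{k+1}} \Vert_{L_2(\Omega)} \leq 2 c_f\, h_{X_k, \Omega}^\beta.
\end{align*}
Invoking the geometric bounds $h_{X_k, \Omega} \leq c_0 a^k$ and $q_{X_{k+1}} \geq c_0' a^{k+1}$ from \Cref{ass:points}, the previous two estimates combine to give
\begin{align*}
\Vert u_{X_{k+1}} - u_{X_k} \Vert_{H^{\beta'}(\Omega)} \leq C\, a^{k(\beta - \beta')}
\end{align*}
for some constant $C$ depending on $c_f, c_0, c_0', a, \beta, \beta', \tau, \Omega$.

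Because $\beta' < \beta$ and $a \in (0, 1)$, the geometric series $\sum_{k \geq 1} a^{k(\beta - \beta')}$ converges, so $(u_{X_n})_{n \in \N}$ is a Cauchy sequence in $H^{\beta'}(\Omega)$. Let $g$ denote its limit in $H^{\beta'}(\Omega)$. Since $H^{\beta'}(\Omega) \hookrightarrow L_2(\Omega)$ continuously, $u_{X_n} \to g$ also in $L_2(\Omega)$, while the hypothesis gives $u_{X_n} \to f$ in $L_2(\Omega)$. Uniqueness of the $L_2$-limit yields $f = g \in H^{\beta'}(\Omega)$, which is the claim. I expect the only subtle point to be checking that the starting term $u_{X_1}$ is harmless (it contributes a fixed constant $\Vert u_{X_1} \Vert_{H^{\beta'}(\Omega)}$, finite by Bernstein applied once) and that the nestedness assumption on $(X_n)$ in \Cref{ass:points} is genuinely used to place the differences $u_{X_{k+1}} - u_{X_k}$ in a single trial space so that Bernstein applies; otherwise the argument is a direct and short consequence of \Cref{th:bernstein}.
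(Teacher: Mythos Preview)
Your proposal is correct and follows essentially the same route as the paper: apply the Bernstein inequality of \Cref{th:bernstein} to the increments $u_{X_{k+1}}-u_{X_k}\in\Ncal_\Phi(X_{k+1})$ (using nestedness), bound their $L_2$-norms via the assumed decay and the triangle inequality, obtain a geometric series in $H^{\beta'}(\Omega)$, and identify the limit with $f$ through the embedding $H^{\beta'}(\Omega)\hookrightarrow L_2(\Omega)$. The paper's proof is identical in structure and in the lemmas invoked.
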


\begin{proof}
We consider the difference of subsequent approximants, 
i.e.\ $u_{X_{n+1}} - u_{X_n}$.
It holds $u_{X_{n+1}} - u_{X_n} \in \Ncal_\Phi(X_{n+1})$ due to the assumption on the nestedness.
Applying the Bernstein inequality Eq.~\eqref{eq:bernstein} using $\beta' < \beta$ gives
\begin{align*}
\Vert u_{X_{n+1}} - u_{X_n} \Vert_{H^{\beta'}(\Omega)} \leq C q_{X_{n+1}}^{-\beta'} \Vert u_{X_{n+1}} - u_{X_n} \Vert_{L_2(\Omega)}.
\end{align*}
Note that here we also made use of the nestedness $X_n \subset X_{n+1}$ of the sets,
such that $q_{X_n \cup X_{n+1}} = q_{X_{n+1}}$.
Inserting a zero and leveraging the assumed decay gives us 
\begin{align*}
\Vert u_{X_{n+1}} - u_{X_n} \Vert_{H^{\beta'}(\Omega)} 
\leq&~ C q_{X_{n+1}}^{-\beta'} \Vert (u_{X_{n+1}} - f) - (u_{X_n} - f) \Vert_{L_2(\Omega)} \\
\leq&~ C q_{X_{n+1}}^{-\beta'} \cdot \left( c_f h_{X_{n+1}, \Omega}^\beta + c_f h_{X_n, \Omega}^\beta \right) \\
\leq&~ 2c_f C q_{X_{n+1}}^{-\beta'} h_{X_{n}, \Omega}^\beta \\
\leq&~ 2c_f C c_0'^{-\beta'} c_0^\beta a^{-\beta'} a^{n(\beta-\beta')}.
\end{align*}
Now we may conclude by following the remaining proof of \cite[Theorem 1]{wenzel2025sharp},
i.e.\ we show that $( u_{X_n} )_{n \in \N}$ is a Cauchy sequence in $H^{\beta'}(\Omega)$ and conclude a limiting element by the completeness of $H^{\beta'}(\Omega)$.
Let $n > m > m_0$, then
\begin{align}
\label{eq:cauchy_sequence_calculation}
\Vert u_{X_{n}} - u_{X_m} \Vert_{H^{\beta'}(\Omega)} &= \left \Vert \sum_{\ell=m}^{n-1} u_{X_{\ell+1}} - u_{X_{\ell}} \right \Vert_{H^{\beta'}(\Omega)} \notag \\
&\leq \sum_{\ell=m}^{n-1} \Vert u_{X_{\ell+1}} - u_{X_{\ell}} \Vert_{H^{\beta'}(\Omega)} \notag \\
&\leq 2c_f C c_0'^{-\beta'} c_0^\beta a^{-\beta'} \cdot \sum_{\ell=m}^{\infty} a^{\ell (\beta-\beta')} 
\leq \sqrt{C''} \cdot \frac{a^{(\beta-\beta') m_0}}{1 - a^{(\beta-\beta')}} \stackrel{m_0 \rightarrow \infty}{\longrightarrow} 0.
\end{align}
In the final step, $\beta' < \beta$ and $a \in (0, 1)$ was used.
Since $H^{\beta'}(\Omega)$ is a complete space, there exists a unique limiting element $\tilde{f} \in H^{\beta'}(\Omega)$ such that $u_{X_n} \rightarrow \tilde{f}$.
Actually we have $\tilde{f} = f$ due to
\begin{align}
\begin{aligned}
\label{eq:estimate_to_show_f_equal_ftilde}
\Vert f - \tilde{f} \Vert_{L_2(\Omega)} 
&\leq \Vert f - u_{X_n} \Vert_{L_2(\Omega)} + \Vert \tilde{f} - u_{X_n} \Vert_{L_2(\Omega)} \\
&\leq c_f h_{X_n, \Omega}^{\beta} + C_{H^{\beta'}(\Omega) \hookrightarrow L_2(\Omega)} \Vert \tilde{f} - u_{X_n} \Vert_{H^{\beta'}(\Omega)} \stackrel{n \rightarrow \infty}{\longrightarrow} 0,
\end{aligned}
\end{align}
and thus it holds $f \in H^{\beta'}(\Omega)$ for any $\beta' < \beta$.
\end{proof}

Next,
we prove a novel inverse statement, that assumes a bound on the approximants measured in the $\Vert \cdot \Vert_{\Ncal_\Phi(\Omega)}$ norm.
Since the function $f$ of interest is usually not included in the RKHS, i.e.\ $f \notin \Ncal_\Phi(\Omega)$,
the assumed bound is on $\Vert I_{X_{n+1}} f - I_{X_n} f \Vert_{\Ncal_\Phi(\Omega)}$,
as the approximants are included in the RKHS.

\begin{theorem}[Inverse statement --- $\Ncal_\Phi(\Omega)$-version]
\label{thm:RKHS-inverse-statement}
Under \Cref{ass:kernel_domain} and \Cref{ass:points}, 
let $f \in \mathcal{C}(\Omega)$ and assume the sequence of kernel interpolants $(I_{X_n}f)_{n \in \N} \subset \Ncal_\Phi(\Omega)$ satisfies 
\begin{align}
\label{eq:RKHS-inverse-statement}
\Vert I_{X_{n+1}}f - I_{X_n}f \Vert_{\Ncal_\Phi(\Omega)} \leq c_f h_{X_n,\Omega}^{\beta - \tau}
\end{align}
for some $c_f >0$ and $\beta \in (0, \tau]$. Then $f \in H^{\beta'}(\Omega)$ for all $\beta' \in (0, \beta)$. 
\end{theorem}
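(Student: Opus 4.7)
The plan is to mirror the proof of \Cref{thm:l2-inverse-statement_strengthened}: show that the sequence $(I_{X_n} f)_{n \in \N}$ is Cauchy in $H^{\beta'}(\Omega)$ for every $\beta' \in (0, \beta)$, extract a limit by completeness, and identify that limit with $f$. The one new ingredient, compared to the $L_2$-case, is that the hypothesis \eqref{eq:RKHS-inverse-statement} is stated in the $\Ncal_\Phi(\Omega)$-norm rather than in $L_2(\Omega)$. I would first convert it into an $L_2(\Omega)$-bound on consecutive differences via a sampling inequality, after which the Bernstein-based Cauchy-sequence argument of \Cref{thm:l2-inverse-statement_strengthened} carries over with essentially no changes.

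Concretely, set $u_n := I_{X_{n+1}} f - I_{X_n} f$. By the nestedness $X_n \subset X_{n+1}$ together with the interpolation property of both interpolants, $u_n$ vanishes on $X_n$, while $u_n \in \Ncal_\Phi(X_{n+1}) \subset \Ncal_\Phi(\Omega) \simeq H^\tau(\Omega)$. Applying the zero-on-$X_n$ sampling inequality \cite[Theorem~2.2]{gia2006continuous} with $\beta=\tau$, $m=0$, $q=2$ (the same one used in the proof of \Cref{th:error_estimate_Lq}) and combining with hypothesis \eqref{eq:RKHS-inverse-statement} yields
\[
\Vert u_n \Vert_{L_2(\Omega)} \leq C\, h_{X_n,\Omega}^\tau \Vert u_n \Vert_{H^\tau(\Omega)} \leq C' h_{X_n,\Omega}^\tau \Vert u_n \Vert_{\Ncal_\Phi(\Omega)} \leq C'' c_f\, h_{X_n,\Omega}^\beta.
\]
For any $\beta' \in (0,\beta) \subset (0,\tau]$, the Bernstein inequality \Cref{th:bernstein} applied to $u_n \in \Ncal_\Phi(X_{n+1})$, together with \Cref{ass:points}, then gives
\[
\Vert u_n \Vert_{H^{\beta'}(\Omega)} \leq C\, q_{X_{n+1}}^{-\beta'} \Vert u_n \Vert_{L_2(\Omega)} \leq \tilde C\, a^{n(\beta - \beta')}.
\]
Since $\beta - \beta' > 0$ and $a \in (0,1)$, telescoping as in \eqref{eq:cauchy_sequence_calculation} shows that $(I_{X_n} f)$ is Cauchy in $H^{\beta'}(\Omega)$, with a limit $\tilde f \in H^{\beta'}(\Omega)$ by completeness.

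The main obstacle is identifying this limit with $f$. In \Cref{thm:l2-inverse-statement_strengthened} the identification came for free from the assumed $L_2$-decay $\Vert f - u_{X_n} \Vert_{L_2} \leq c_f h_{X_n,\Omega}^\beta$, but here one has only derived an $L_2$-bound on the consecutive differences $\Vert u_n \Vert_{L_2}$, which yields Cauchy-in-$L_2$ but does not a priori pin the limit to $f$. My plan is to invoke a convergence result for kernel interpolants of continuous functions on bounded Lipschitz domains (cf.\ \cite{iske2025convergence}) to conclude $I_{X_n} f \to f$ in $L_2(\Omega)$; combined with $I_{X_n} f \to \tilde f$ in $L_2(\Omega)$ via the continuous embedding $H^{\beta'}(\Omega) \hookrightarrow L_2(\Omega)$, this forces $\tilde f = f$ and hence $f \in H^{\beta'}(\Omega)$ for every $\beta' \in (0,\beta)$.
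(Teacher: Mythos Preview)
Your proposal is correct and reaches the same conclusion, but the route to the $H^{\beta'}$-bound on consecutive differences differs from the paper's. After deriving $\Vert u_n \Vert_{L_2(\Omega)} \leq C h_{X_n,\Omega}^\beta$ via the sampling inequality (which both you and the paper do identically), you pass to $H^{\beta'}$ by applying the Bernstein inequality of \Cref{th:bernstein} to $u_n \in \Ncal_\Phi(X_{n+1})$, thereby reusing verbatim the machinery of \Cref{thm:l2-inverse-statement_strengthened}. The paper instead interpolates directly between the derived $L_2$-bound and the assumed $\Ncal_\Phi$-bound \eqref{eq:RKHS-inverse-statement} via Gagliardo--Nirenberg, obtaining
\[
\Vert u_n \Vert_{H^{\theta\tau}(\Omega)} \leq C \Vert u_n \Vert_{L_2(\Omega)}^{1-\theta}\,\Vert u_n \Vert_{\Ncal_\Phi(\Omega)}^{\theta} \leq C h_{X_n,\Omega}^{\beta-\theta\tau}.
\]
With $\beta' = \theta\tau$ both routes give the identical decay $h_{X_n,\Omega}^{\beta-\beta'}$, and since \Cref{th:bernstein} was itself proved via Gagliardo--Nirenberg the two arguments are essentially equivalent at the level of tools. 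Your version makes the proofs of \Cref{thm:l2-inverse-statement_strengthened} and \Cref{thm:RKHS-inverse-statement} structurally parallel; the paper's version uses the hypothesis \eqref{eq:RKHS-inverse-statement} a second time directly rather than only through the sampling step.

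On the identification $\tilde f = f$: you are right to flag this as the delicate point and to note that \eqref{eq:estimate_to_show_f_equal_ftilde} does not transfer verbatim, since no bound on $\Vert f - I_{X_n}f\Vert_{L_2(\Omega)}$ is part of the hypothesis here. The paper simply writes ``with the same argumentation as in Eq.~\eqref{eq:estimate_to_show_f_equal_ftilde}'', which tacitly presumes $I_{X_n}f \to f$ in $L_2(\Omega)$; your explicit appeal to a convergence result for interpolants of continuous functions (as the paper itself cites in \Cref{sec:kernel_interpolation} via \cite{iske2025convergence}) makes this dependence transparent and is, if anything, more careful than the paper's treatment.
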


\begin{proof}
Note that in the following, the constant $C$ might change from line to line.
Standard sampling inequalities (see e.g.\ \cite[Theorem 2.2]{gia2006continuous}) applied to $I_{X_{n+1}}f - I_{X_n}f$ (which has zeros in $X_n$) gives
\begin{align*}
\Vert I_{X_{n+1}}f - I_{X_n}f \Vert_{L_2(\Omega)} 
&\leq C h_{X_n,\Omega}^\tau \Vert I_{X_{n+1}}f - I_{X_n}f \Vert_{W_2^\tau(\Omega)} \\
&\leq C h_{X_n,\Omega}^\tau \Vert I_{X_{n+1}}f - I_{X_n}f \Vert_{\Ncal_\Phi(\Omega)} \\
&\leq C h_{X_n,\Omega}^{\beta},
\end{align*}
where in the last inequality the assumed bound from Eq.~\eqref{eq:RKHS-inverse-statement} was applied.
Now we would like to bound $\Vert I_{X_{n+1}}f - I_{X_n}f \Vert_{H^{\theta \tau}(\Omega)}$,
which can be done by interpolation theory (i.e.\ using standard Gagliardo-Nirenberg inequality, see e.g.\ \cite[Theorem 1]{brezis2018gagliardo}). 
This and inserting the rates from before gives
\begin{align*}
\Vert I_{X_{n+1}}f - I_{X_n}f \Vert_{H^{\theta \tau}(\Omega)} 
&\leq C \Vert I_{X_{n+1}}f - I_{X_n}f \Vert_{L_2(\Omega)}^{1-\theta} \cdot \Vert I_{X_{n+1}}f - I_{X_n}f \Vert_{\Ncal_\Phi(\Omega)}^\theta \\
&\leq C h_{X_n, \Omega}^{\beta(1-\theta)} \cdot h_{X_{n+1},\Omega}^{(\beta - \tau)\theta} \leq C h_{X_n,\Omega}^{\beta(1-\theta)} \cdot h_{X_n,\Omega}^{(\beta-\tau)\theta} \\
&= C h_{X_n,\Omega}^{\beta- \tau \theta} \leq C a^{n (\beta- \tau \theta)}.
\end{align*}
The final estimate used the assumption on the point distribution from \Cref{ass:points}, in particular recall that $a \in (0, 1)$.

For $\theta < \frac{\beta}{\tau} \Leftrightarrow \theta \tau < \beta$,
the exponent $\beta - \tau \theta$ is positive.
Using standard arguments, we may thus show that for $\theta \in (0, \beta/\tau) \subset (0, 1)$,
the sequence $(I_{X_n}f)_{n \in \N} \in H^{\theta \tau}(\Omega) \supset \Ncal_\Phi(\Omega)$ is a Cauchy sequence in $H^{\theta \tau}(\Omega)$:
Let $n > m > m_0$,
\begin{align*}
\Vert I_{X_{n}}f - I_{X_m}f \Vert_{H^{\theta \tau}(\Omega)} &= \left \Vert \sum_{\ell=m}^{n-1} I_{ X_{\ell+1}}f - I_{X_{\ell}}f \right \Vert_{H^{\theta \tau}(\Omega)} \\
&\leq \sum_{\ell=m}^{n-1} \Vert I_{X_{\ell+1}}f - I_{ X_{\ell}}f \Vert_{H^{\theta \tau}(\Omega)} \\
&\leq C \cdot \sum_{\ell=m}^{\infty} a^{n (\beta - \tau \theta)}  
\leq C \cdot \frac{a^{m_0 (\beta - \tau \theta)}}{1 - a^{\beta- \tau \theta}} \stackrel{m_0 \rightarrow \infty}{\longrightarrow} 0.
\end{align*}
Since $H^{\theta \tau}(\Omega)$ is a complete space, there exists a unique limiting element $\tilde{f} \in H^{\theta \tau}(\Omega)$ such that $I_{X_n}f \rightarrow \tilde{f}$.
With the same argumentation as in Eq.~\eqref{eq:estimate_to_show_f_equal_ftilde} we obtain $f = \tilde{f}$.

\end{proof}

Note that a bound $\Vert I_{X_n} f \Vert_{\Ncal_\Phi(\Omega)} \leq c_f' h_{X_n, \Omega}^{\beta-\tau}$ for all $n \in \N$ already implies Eq.~\eqref{eq:RKHS-inverse-statement} via triangle inequality
$\Vert I_{X_{n+1}}f - I_{X_n}f \Vert_{\Ncal_\Phi(\Omega)} \leq \Vert I_{X_{n+1}}f \Vert_{\Ncal_\Phi(\Omega)} + \Vert I_{X_n}f \Vert_{\Ncal_\Phi(\Omega)}$.

Finally we conclude this section with a small comment on sharp inverse statements using non-Hilbert $L_p(\Omega)$, $p \neq 2$, convergence rates.
A preliminary statement in this direction using $p=\infty$ was given in \cite[Theorem 6.1]{schaback2002inverse},
which was however not sharp compared to the corresponding direct statement formulated e.g.\ in \Cref{th:error_estimate_Lq}. 
Whether a sharp inverse statement using $L_\infty(\Omega)$ can be obtained,
i.e.\ whether a decay assumption as $\Vert f - u_{X_n} \Vert_{L_\infty(\Omega)} \leq c_f h_{X,\Omega}^{\beta-d/2}$ actually implies $f \in W_2^\beta(\Omega)$,
remains unclear.
In fact, \cite{ward2012lp} deals with inverse statements using $L_p(\R^d)$ norms 
(i.e.\ working on $\R^d$ instead of in compact regions $\Omega \subset \R^d$),
but concludes that the target function is in some Bessel potential space,
which does not coincide with the Sobolev space $W_2^\beta(\Omega)$.

\section{Steps of SALSA}
\label{sec:salsa}

\subsection{Theoretical rationale}

The direct and inverse statements of \Cref{th:error_estimate_Lq} and \Cref{thm:l2-inverse-statement_strengthened} (using $L_2(\Omega)$ norm)
and the respective \Cref{th:direct_statement_RKHS} and \Cref{thm:RKHS-inverse-statement} (using $\Ncal_\Phi(\Omega)$ norm)
establish a one-to-one correspondence between convergence rate and smoothness of a function.
Thus in practice, the smoothness of a function can be obtained by measuring the rate of decay respective growth of the $L_2(\Omega)$ respective $\Ncal_\Phi(\Omega)$ norm of subsequent approximants.
Due to extension and restriction statements for native space \cite[Section 10.7]{wendland2004scattereddata},
this principle can directly be applied in a localized way.
This thus allows to calculate the local smoothness based on the convergence rates of local approximants.

\subsection{The SALSA algorithm}

Based on the theoretical rationale explained before, SAL\-SA estimates the local Sobolev regularity of $f$ on a region 
$O$ by observing how kernel interpolants on nested point sets converge as the fill distance shrinks.

Let $X_1 \subset X_2 \subset \dots \subset X_M \subset O$ be a sequence of nested sampling sets with geometrically decaying fill distances $h_{X_m, \Omega}$, as specified in \Cref{ass:points}. For each level $m$, we construct a kernel interpolant $I_{X_m}f$ from the function samples $\bs f_m = \{f(\bs{x}) : \bs{x} \in X_m\}$.
To estimate the local regularity of the function $f$, we compute two sequences of quantities: the native-space norm of each interpolant, $\left\|I_{X_m} f\right\|_{\mathcal{N}_{\Phi}(X_m)}$, and the squared discrete $L^2$-norm of the difference between successive interpolants evaluated at the finer scale $X_M$, i.e.
$\left\|I_{X_m} f-I_{X_{m-1}} f\right\|_{L^2\left(X_M\right)}^2$.

Each sequence is fitted to a power law using log-log regression. The $L_2$-based measure directly yields an estimate $\tilde{\beta}_2$ of the Sobolev smoothness $\beta$, while the native-space norm yields an estimate $\tilde{\gamma}_{\mathcal{N}}$ of $\gamma = \beta - \tau$, where $\tau$ is the smoothness of the reproducing kernel. This procedure is formally described in Algorithm~\ref{alg:local_smoothness_detection}.

\begin{algorithm}[htbp!]
\caption{Sobolev Algorithm for Local Smoothness Analysis (SALSA)}
\label{alg:local_smoothness_detection}
\begin{algorithmic}[1]
\Require Nested point sets
\(
X_1 \subset X_2 \subset \cdots \subset X_M \subset \Omega
\)
\Require Function values
\(
\bs{f}_m = \{ f(\bs{x}) : \bs{x} \in X_m \}, \quad m=1,\ldots,M
\)
\Ensure Estimated local Sobolev smoothness
\(
\beta
\) of 
\(
f
\).
\vspace{0.7em}
\State \underline{Initialize:}
\(
\quad \bs{c}^{(2)} \gets [~],
\quad \bs{c}^{(\mathcal{N})} \gets [~]
\)
\vspace{0.7em}
\For{\( m = 2 \) to \( M \)}
    \vspace{0.7em}
    \State \underline{Compute:}
    \(
    \quad \bs{K}_{m-1} = \bigl( K(\bs{x}_i, \bs{x}_j) \bigr)_{\substack{\bs{x}_i \in X_{m-1} \\ \bs{x}_j \in X_{m-1}}} , 
    \quad \bs{K}_{m} = \bigl( K(\bs{x}_i, \bs{x}_j) \bigr)_{\substack{\bs{x}_i \in X_{m} \\ \bs{x}_j \in X_{m}}} 
    \)
    \vspace{0.7em}
    
    \State \underline{Solve:}
    \(
    \quad \bs{K}_{m-1} \bs{\alpha}_{m-1} = \bs{f}_{m-1},
    \quad \bs{K}_{m} \bs{\alpha}_{m} = \bs{f}_{m}
    \)
    \vspace{0.7em}
    
    \State \underline{Evaluate on $X_M$:}
    \(
    \quad \bs I_{X_{m-1}} f = \bs{K}_{M,m-1} \bs{\alpha}_{m-1}, \quad
    \bs I_{X_{m}} f = \bs{K}_{M,m} \bs{\alpha}_{m}
    \)
    
    \vspace{0.7em}
    \State \underline{Compute:}
    \(
    \quad c^{(2)}_{m-1} = \left\| \bs I_{X_{m}}f - \bs I_{X_{m-1}}f \right\|_{L_2(X_M)}^2, 
    \) \\
        \vspace{0.3em}
    \(
    \quad \quad \quad \quad \quad \quad  \quad  c^{(\mathcal{N})}_{m-1} = \bs{f}_{m-1}^\top \bs{K}_{m-1}^{-1}  \bs{f}_{m-1}
    \) 
    
    \vspace{0.7em}
    
    \State $\bs c^{(2)} \gets [\bs c^{(2)}, c^{(2)}_{m-1}] $
        \vspace{0.3em}
        
    \State $\bs c^{(\mathcal{N})} \gets [\bs c^{(\mathcal{N})}, c^{(\mathcal{N})}_{m-1}] $
\vspace{0.7em}
\EndFor
\vspace{0.7em}
\State \underline{Fit:}
\(
\quad \bs c^{(2)} \sim h_{X_m,\Omega}^{2 \tilde{\beta}_2}, 
\quad
\bs c^{(\mathcal{N})} \sim h_{X_m,\Omega}^{2 \tilde{\gamma}_{\mathcal{N}}}
\)
\vspace{0.7em}
\State \underline{Estimate:} 
\(
\quad \beta = \tilde{\beta}_2, 
\quad \text{or}
\quad \beta = \tau + \tilde{\gamma}_{\mathcal{N}}
\)
\Comment{See Assumption \ref{ass:kernel_domain} for $\tau$.}
\vspace{0.7em}
\State \Return 
\(
\beta
\)
\end{algorithmic}
\end{algorithm}

\subsection{Nested nodal subsets construction} \label{sec:subsets_section}

Before delving into the numerical validation of SALSA, 
we discuss in the following two approaches to generate nested sequences of nodes that agree with \Cref{ass:points}. 
The first approach relies on a theoretical framework in which we assume knowledge of the underlying function on the whole domain. In contrast, with the second approach the target function is only known at a given set of data sites.

\subsubsection{Closed-form function}\label{sec:stecntils_section} When the function is known in closed-form everywhere on the domain, we adopt fixed translation-invariant stencils. The basic idea is to precompute a family of nested reference stencils of increasing resolution $\{ \mathcal{S}_m\}_{m=1}^M$, and then reuse them for every evaluation \textit{center} point $\bs x_c \subset X$ by means of a simple translation and dilation. The radius of the reference stencil is chosen according to the dataset’s fill distance $h_{X,\Omega}$, so that:
\[
\mathcal{S}_m \subset [\bs x_c - h_{X,\Omega}, \bs x_c + h_{X,\Omega}]^d.
\]
Precisely, the nested stencils are constructed by successive midpoint insertions, so that the number of nodes per edge $ \mathcal{E} \isdef \mathcal{S}^{1/d}$ satisfies the recurrence formula
\[
\# \mathcal{E}_{m+1} = 2(\# \mathcal{E}_{m}) - 1,
\]
where $\# \mathcal{A}$ denotes the cardinality of the set $\mathcal{A}$. A visualization of the stencils in one-, two-, and three-dimensional settings is presented in Figure~\ref{fig:local_stencils}. In the experiments, we refer to this approach as the \emph{Fixed Stencils Method}.

\begin{remark}
When evaluating points near the boundary of the domain, the stencils are generated in the same manner, but only the nodes lying inside the domain are retained. In other words, the translation-invariant reference stencils are clipped to the domain before use at boundary centers.
\end{remark}

\begin{figure}[]
    \centering
    \begin{subfigure}{0.8\textwidth}
        \centering
        \includegraphics[width=0.4\linewidth]{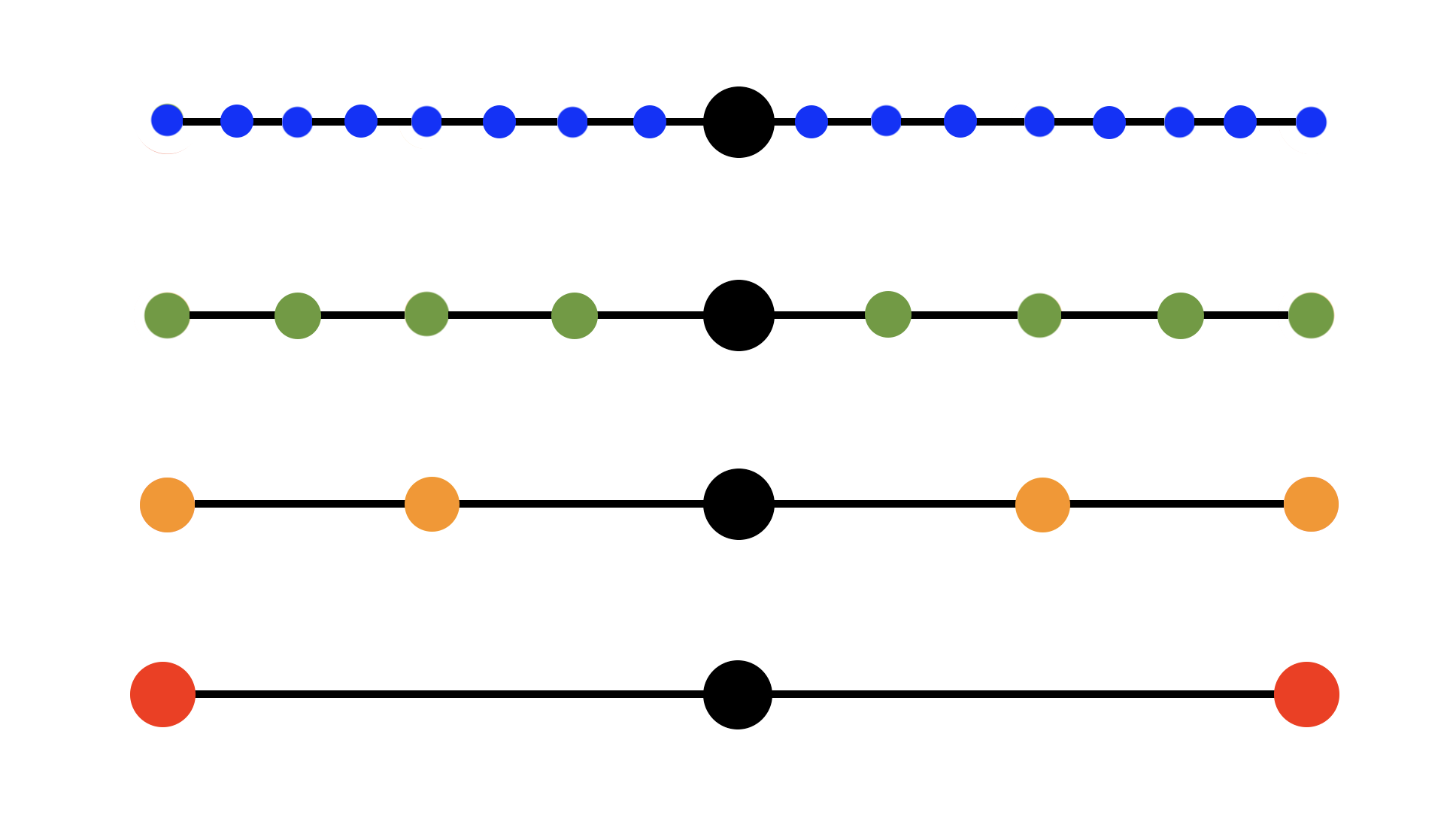} %
        \caption{1D stencils for $m=4$ refinements.}
        \label{fig:stencil_1d}
    \end{subfigure}
    
    \vspace{1em} %

    \begin{subfigure}{0.3\textwidth}
        \centering
        \includegraphics[width=0.54\linewidth]{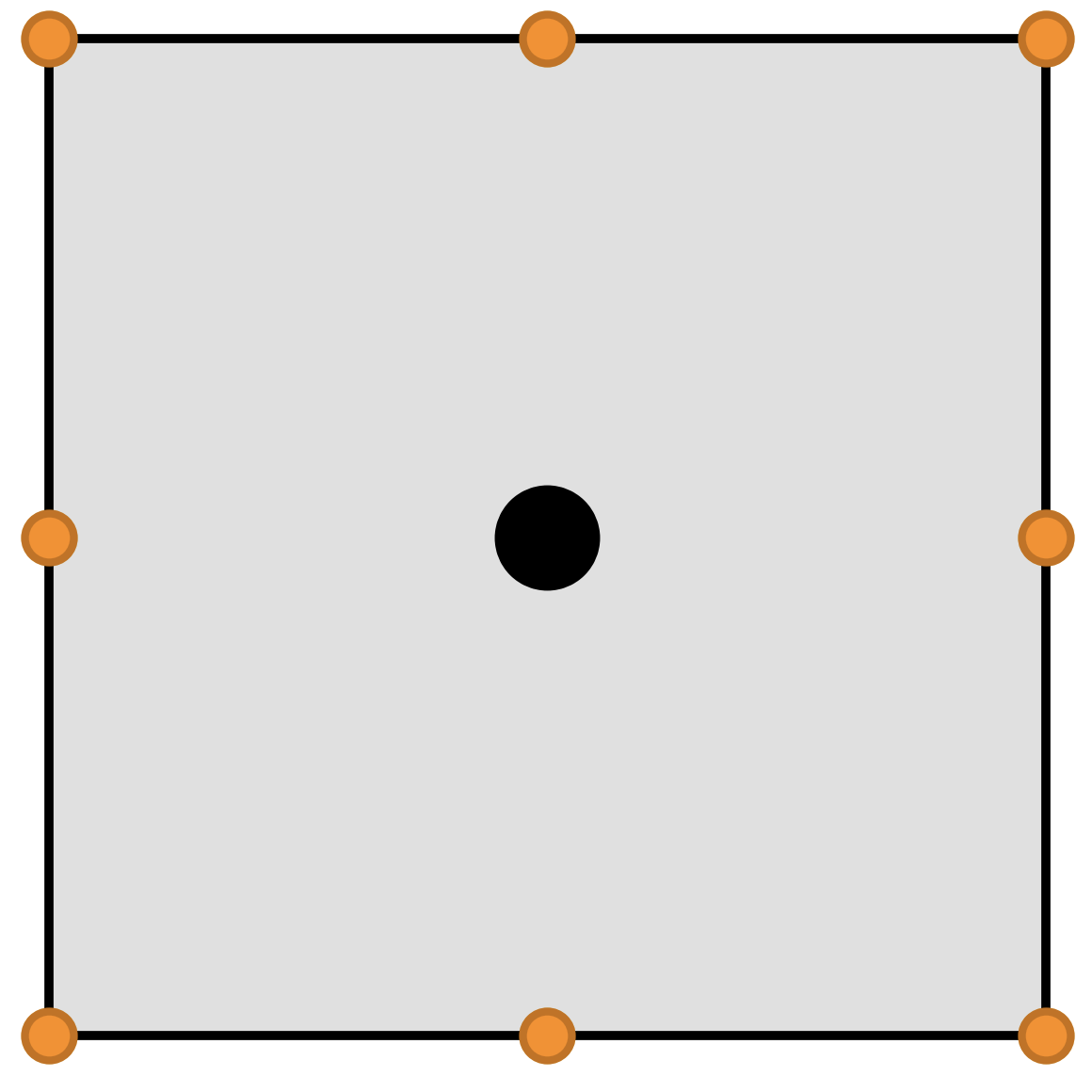}
        \caption{2D stencil for $m$.}
        \label{fig:stencil_2d_2}
    \end{subfigure}
    \begin{subfigure}{0.3\textwidth}
        \centering
        \includegraphics[width=0.54\linewidth]{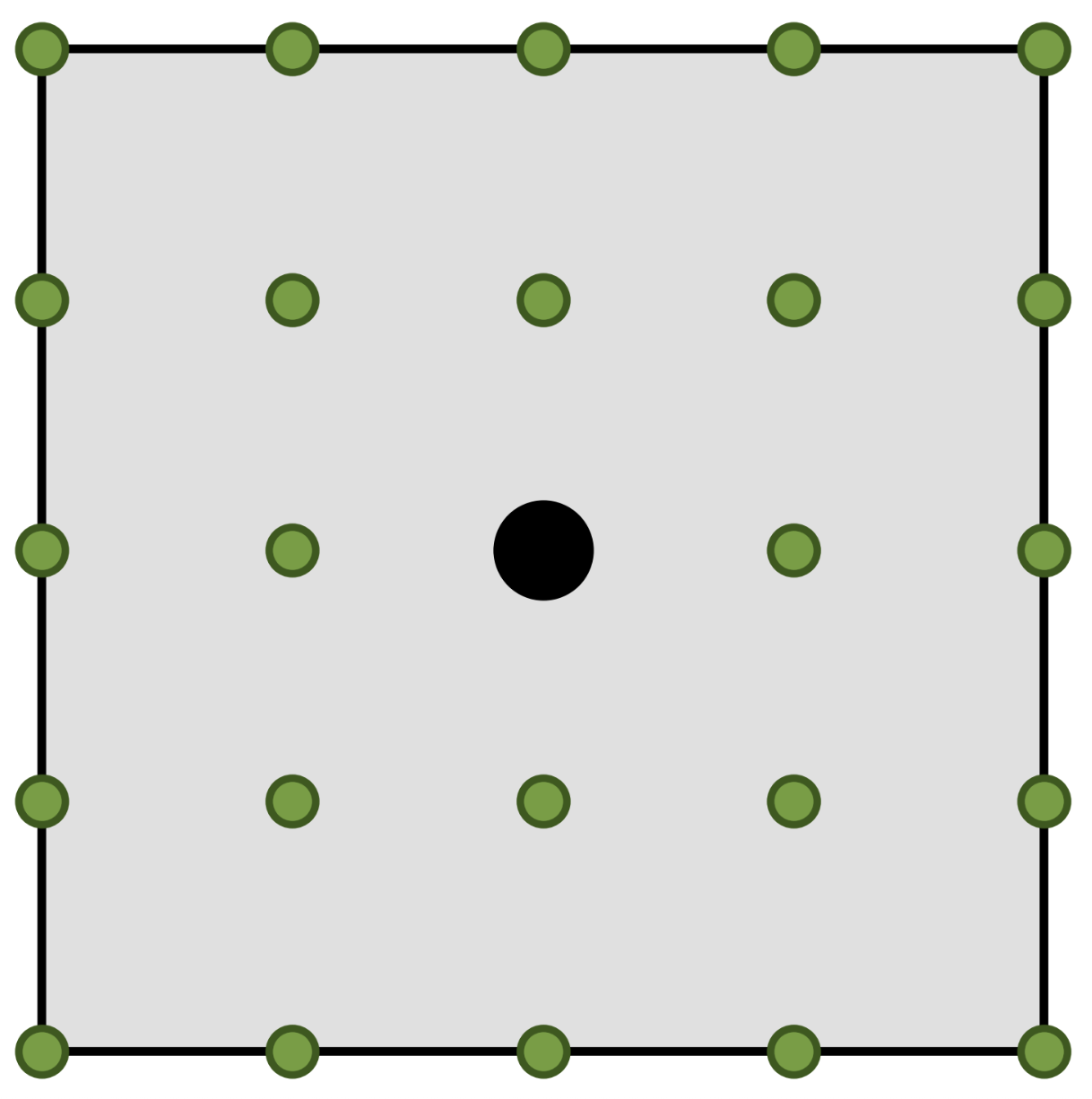}
        \caption{2D stencil for $m+1$.}
        \label{fig:stencil_2d_3}
    \end{subfigure}

    \vspace{1em} %

    \begin{subfigure}{0.3\textwidth}
        \centering
        \includegraphics[width=0.6\linewidth]{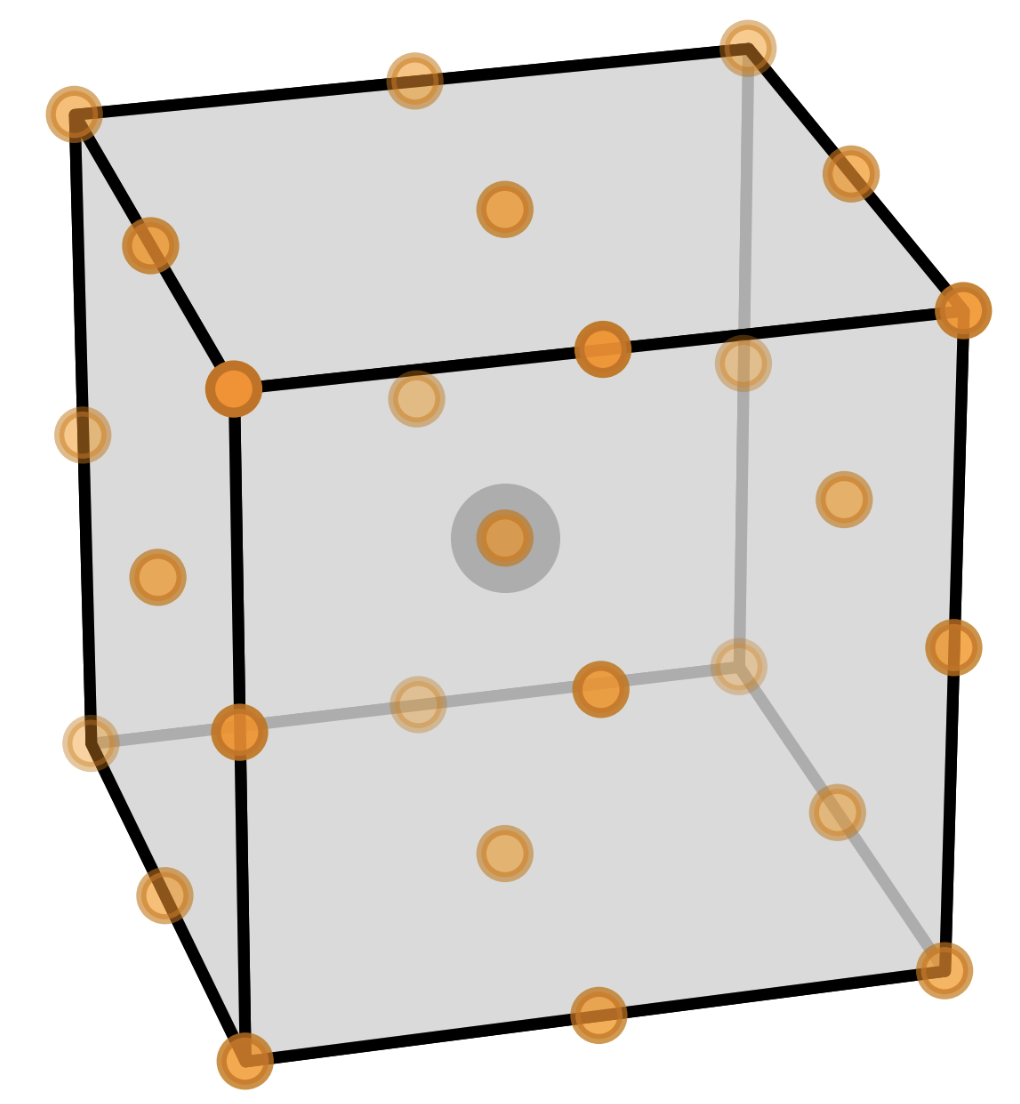}
        \caption{3D stencil for $m$.}
        \label{fig:stencil_3d_2}
    \end{subfigure}
    \begin{subfigure}{0.3\textwidth}
        \centering
        \includegraphics[width=0.6\linewidth]{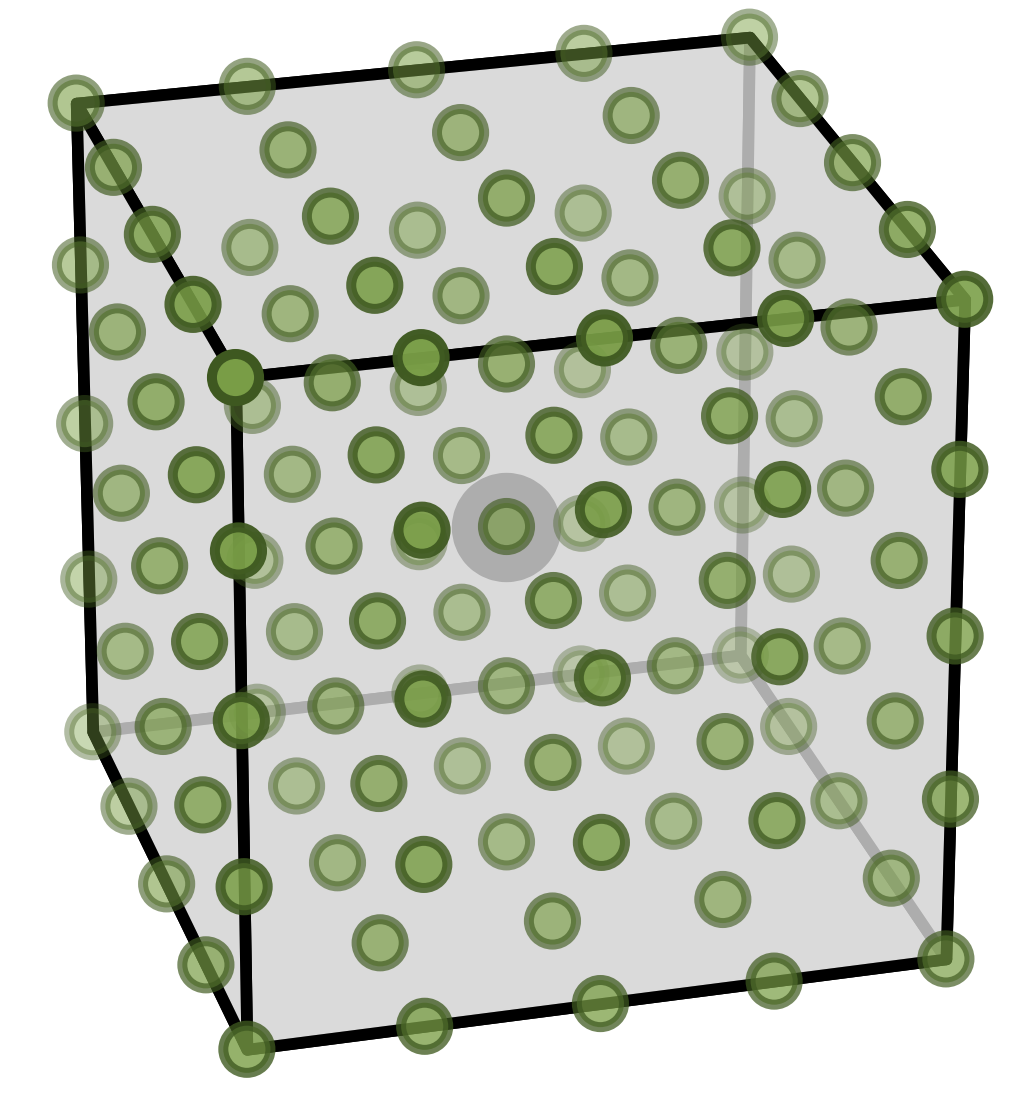}
        \caption{3D stencil for $m+1$.}
        \label{fig:stencil_3d_3}
    \end{subfigure}

    \caption{
        Visualization of local stencils in (a) one-dimensional, 
        (b--c) two-dimensional, and (d--e) three-dimensional settings.
    }
    \label{fig:local_stencils}
\end{figure}

\subsubsection{Discrete function values} \label{sec:uniform_subsampling}

When the target function is not available in closed form and only its values at a discrete set of data sites $X\subset\Omega\subset\mathbb{R}^d$ are given, we construct a nested family of local sample sets using the \emph{Uniform Subsampling Algorithm} described in \cite[Algorithm 1]{griebel2025kernel}, which we recall in the following. 
Let \(\boldsymbol{x}_c\in X\) be a chosen center and let \(\operatorname{knn}(\boldsymbol{x}_c)\) denote its \(k\)-nearest neighbours measured in the Chebyshev norm \(\|\cdot\|_\infty\). After an affine rescaling of these points into the unit cube \([0,1]^d\), we build a multilevel hierarchy by successive top–down uniform subsampling. For each scale \(m=0,1,\dots,M\) we partition \([0,1]^d\) into \(2^{md}\) axis-aligned cuboids of side length \(2^{-m}\). In every cuboid that contains at least one point of \(\operatorname{knn}(\boldsymbol{x}_c)\) we choose as representative the point nearest the cuboid midpoint; this representative is added to the level set \(X_m\) unless it was already selected at a coarser level.
This procedure yields a nested sequence of point sets
\(
X_1 \subset X_2 \subset \cdots \subset X_M,
\)
with fill distances satisfying
\[
h_{X_m,\Omega}\sim 2^{-m}.
\]
Selecting the point closest to the midpoint, rather than an arbitrary point in the cuboid, improves the separation radius. A visualization of the subsampling procedure for $m = 0,1,2,3$ starting from a set of 300 Halton points on the unit square is given in Figure \ref{fig:uniform_subsampling}.

\begin{figure}
    \centering
    \includegraphics[width=1\linewidth]{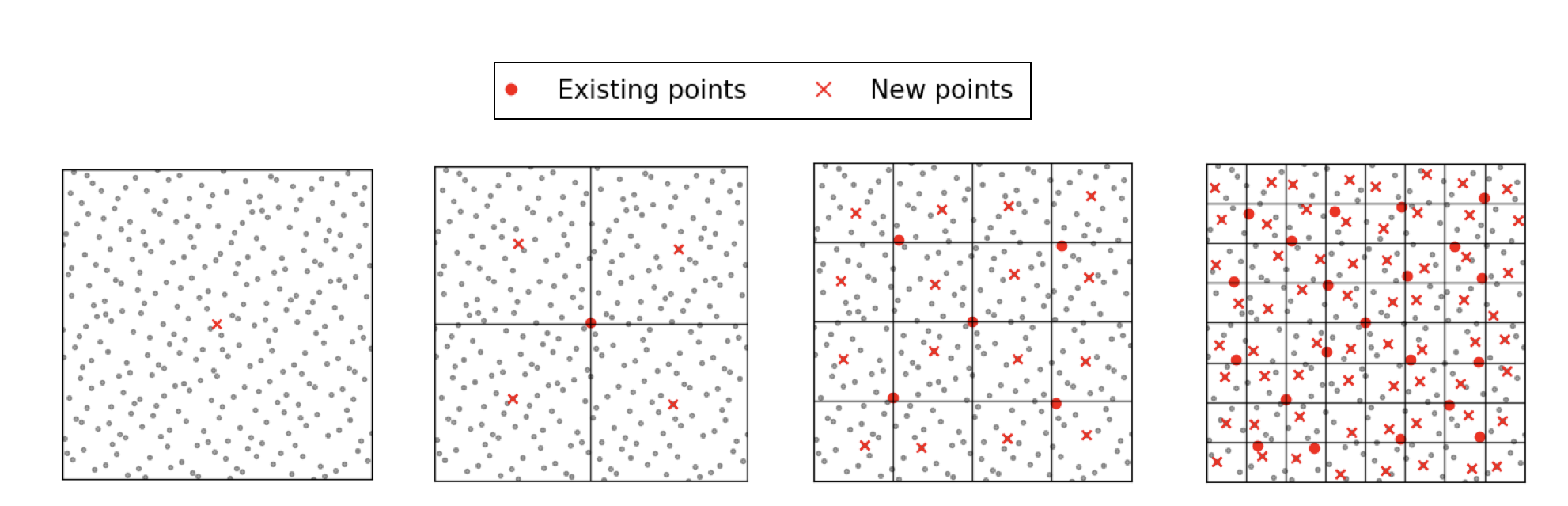}
    \caption{Uniform subsampling selection starting from 300 Halton points in the unit square $[0,1]^2$.}
    \label{fig:uniform_subsampling}
\end{figure}

\section{Numerical results}
\label{sec:num_results}
In this section, we apply the proposed method to local regularity detection in
one‑, two‑, and three‑dimensional signals.
The experiments have been performed on a MacBook Pro with an Apple M2 Max
processor and 32~GB of main memory. 

\subsection{One-dimensional setting.} 
We consider a piecewise function defined on $\Omega = [-1,1]$ as follows: 
\[
f(x)=
\begin{cases}
6, & x < -0.4,\\[6pt]
0.1\bigl| -20x - 9\bigr| + 6, & -0.4 \le x < -0.35,\\[6pt]
0.1\bigl| -20x - 5\bigr| + 6, & -0.35 \le x < -0.15,\\[6pt]
0.1\bigl| -20x - 1\bigr| + 6, & -0.15 \le x < -0.05,\\[6pt]
6 + \sin(20\pi x), & -0.05 \le x < 0.55,\\[6pt]
0.2\sin(6\pi x), & x \ge 0.55.
\end{cases}
\]
This function exhibits corner singularities at $x \in \{-0.35, -0.25, -0.15,-0.05\}$ 
and jump discontinuities at $x=\{-0.4, 0.55\}$.
Interpolation is performed using an $H^\tau(\mathbb{R})$ reproducing kernel with $\tau=3$. We first estimate the local smoothness exponent 
$\beta$ using the Fixed Stencils method introduced in Section~\ref{sec:stecntils_section}. We use $2\,000$ data sites, perform 8 successive refinements around each center, and set the kernel lengthscale parameter to twice the radius of the initial local stencil. The results are shown in the left column of Figure~\ref{fig:1d_jump_corners}.

We then apply the more general Uniform Subsampling method introduced in Section~\ref{sec:uniform_subsampling}, starting from $20\,000$ Halton points and selecting the $200$ nearest neighbours for each center. We again perform 8 refinements, and choose the lengthscale parameter to be twice the diameter of the interval containing the neighbours. The outcomes are displayed in the right column of Figure~\ref{fig:1d_jump_corners}.

At the jump discontinuities, the native-space estimate gives 
\(
\tilde{\gamma}_{\mathcal{N}} \approx -\tfrac{5}{2},
\)
which implies 
\(
\beta = \tau + \tilde{\gamma}_{\mathcal{N}} \approx \tfrac{1}{2}.
\)
The \(L_2\)-based estimator similarly yields 
\(
\tilde{\beta}_2 \approx \tfrac{1}{2}.
\)
Since a jump discontinuity belongs to \(H^{s}(O)\) for every \(s < \tfrac{1}{2}\), both approaches correctly identify the limiting smoothness \( \tfrac{1}{2} - \varepsilon\).
At the corner singularities, both estimates give 
\(
\beta \approx \tfrac{3}{2}.
\)
In this case, the true local regularity satisfies 
\( f \in H^{s}(O) \) for all \( s < \tfrac{3}{2} \), 
and therefore the method again detects the limiting value \( \tfrac{3}{2} - \varepsilon \) in agreement with theory.

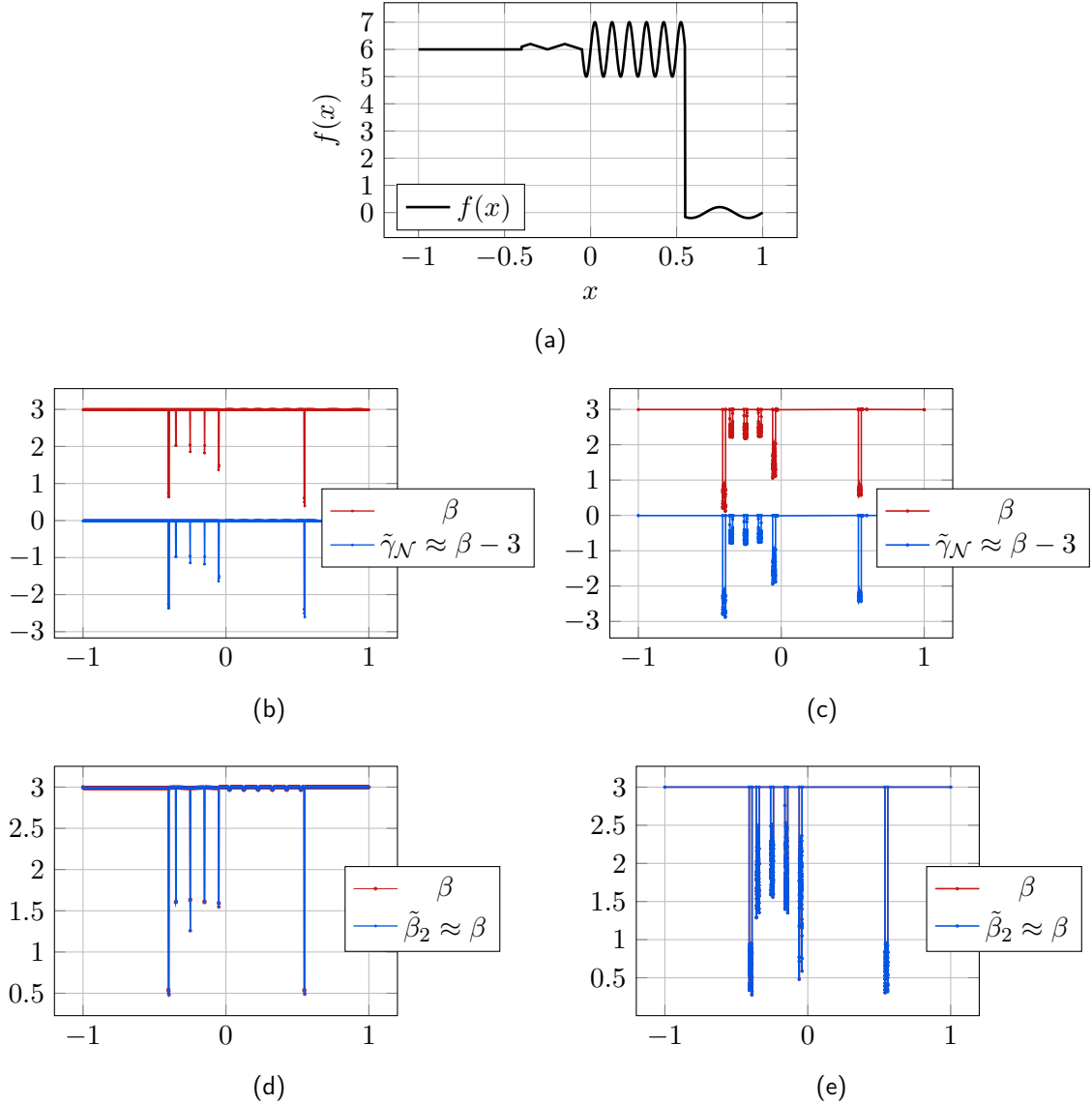
\begin{figure}[] %
  \centering
  \begin{subfigure}{0.8\textwidth}
    \centering
    \begin{tikzpicture}
      \begin{axis}[
        xlabel={$x$},
        ylabel={$f(x)$},
        grid=both,
        ytick distance=1,
        width=0.6\textwidth,
        height=0.4\textwidth,
        legend pos=south west
      ]
        \addplot[
          color=black,
          mark=none,
          line width=1pt
        ] table [x index=0, y index=1, col sep=space] {figure/1d_function.txt};
        \addlegendentry{$f(x)$}
      \end{axis}
    \end{tikzpicture}
    \caption{}
    \label{fig:1a}            
  \end{subfigure}
  
  \vspace{1em}

  \begin{subfigure}{0.49\textwidth}
    \begin{tikzpicture}
      \begin{axis}[
        grid=both,
        ytick distance=1,
        width=0.85\textwidth,
        height=0.68\textwidth,
         legend style={at={(1.4,0.6)}}
      ]
		\addplot[
          color={rgb,255:red,200; green,20; blue,20},
          mark=asterisk,
          mark size=0.5pt,
          line width=0.5pt
        ] 
        table [x index=0, y index=1, col sep=space] {figure/1d_tau_native_true.txt};
        \addlegendentry{$\beta$}
        \addplot[
          color={rgb,255:red,0; green,87; blue,231},
          mark=asterisk,
          mark size=0.5pt,
          line width=0.5pt
        ] 
        table [x index=0, y index=1, col sep=space] {figure/1d_tau_native.txt};
    \addlegendentry{$\tilde{\gamma}_\mathcal{N} \approx \beta-3$}
      \end{axis}
    \end{tikzpicture}
    \caption{}
    \label{fig:1b}            
  \end{subfigure}
\hfill
  \begin{subfigure}{0.49\textwidth}
    \centering
    \begin{tikzpicture}
        \begin{axis}[
        grid=both,
        ytick distance=1,
        width=0.85\textwidth,
        height=0.68\textwidth,
         legend style={at={(1.4,0.6)}}
      ]
        \addplot[
          color={rgb,255:red,200; green,20; blue,20},
          mark=asterisk,
          mark size=0.6pt,
          line width=0.6pt
        ] 
        table [x index=0, y index=1, col sep=space] {figure/1d_tau_native_true_halton.txt};
        \addlegendentry{$\beta$}
        \addplot[
          color={rgb,255:red,0; green,87; blue,231},
          mark=asterisk,
          mark size=0.6pt,
          line width=0.6pt
        ] 
        table [x index=0, y index=1, col sep=space] {figure/1d_tau_native_halton.txt};
    \addlegendentry{$\tilde{\gamma}_{\mathcal{N}} \approx \beta-3$}
      \end{axis}
    \end{tikzpicture}
    \caption{}
    \label{fig:1c}            
  \end{subfigure}
  \vspace{0.5 cm}
  
  \begin{subfigure}{0.49\textwidth}
    \begin{tikzpicture}
      \begin{axis}[
        grid=both,
        ytick distance=0.5,
        width=0.85\textwidth,
        height=0.68\textwidth,
         legend style={at={(1.3,0.6)}}
      ]
        \addplot[
          color={rgb,255:red,200; green,20; blue,20},
          mark=asterisk,
          mark size=0.8pt,
          line width=0.2pt
        ] 
        table [x index=0, y index=1, col sep=space] {figure/1d_tau_2.txt};
        \addlegendentry{$\beta$}
        \addplot[
          color={rgb,255:red,0; green,87; blue,231},
          mark=asterisk,
          mark size=0.5pt,
          line width=0.5pt
        ] 
        table [x index=0, y index=1, col sep=space] {figure/1d_tau_2.txt};
    \addlegendentry{$\tilde{\beta}_2\approx \beta$}
      \end{axis}
    \end{tikzpicture}
    \caption{}
    \label{fig:1d}            
  \end{subfigure}
  \hfill
  \begin{subfigure}{0.49\textwidth}
    \centering
    \begin{tikzpicture}
        \begin{axis}[
        grid=both,
        ytick distance=0.5,
        width=0.85\textwidth,
        height=0.68\textwidth,
         legend style={at={(1.3,0.6)}}
      ]
        \addplot[
          color={rgb,255:red,200; green,20; blue,20},
          mark=asterisk,
          mark size=0.6pt,
          line width=0.6pt
        ] 
        table [x index=0, y index=1, col sep=space] {figure/1d_tau_2_halton.txt};
        \addlegendentry{$\beta$}
        \addplot[
          color={rgb,255:red,0; green,87; blue,231},
          mark=asterisk,
          mark size=0.6pt,
          line width=0.6pt
        ] 
        table [x index=0, y index=1, col sep=space] {figure/1d_tau_2_halton.txt};
    \addlegendentry{$\tilde{\beta}_{2}\approx \beta$}
      \end{axis}
    \end{tikzpicture}
    \caption{}
    \label{fig:1e}            
  \end{subfigure}

  \caption{We analyze a univariate function shown in subfigure~\subref{fig:1a}, using a kernel of smoothness $\tau=3$. Subfigures \subref{fig:1b} and \subref{fig:1d} %
  display the results of singularity detection using the Fixed Stencils Method, while Subfigures \subref{fig:1c} and \subref{fig:1e} %
  show the results obtained with Halton points and the Uniform Subsampling method.}
  \label{fig:1d_jump_corners}
\end{figure}

\subsection{Two-dimensional setting.} 
We study the function shown in Figure~\ref{fig:composite_function}, 
which is the superposition of seven distinct components: a global sinusoidal modulation with frequency $4\pi$ in both directions;
a localized linear ramp confined to a rectangular subdomain;
 a ridge following a sinusoidal curve with linear decay perpendicular to the path;
a truncated cone centered at $(1.5, 4.5)$;
an exponential kernel centered at $(4.5, 2)$;
a damped oscillatory function
and a piecewise constant block over a rectangular support region.

Interpolation is performed using an $H^{\frac{7}{2}}(\mathbb{R}^2)$ reproducing kernel, i.e. $\tau = \frac{7}{2}$.
We apply both the Fixed Stencil Method, initialized with a grid of $100 \times 100$ points, and the Uniform Subsampling Algorithm, starting from $200\,000$ Halton points with 3 levels of refinement. Each local refinement is constructed from the $300$ nearest neighbors, and the lengthscale parameter is chosen as 
$h_{X,\Omega} \, \text{diam}(B)$, where $h_{X,\Omega}$
is the global fill distance and $\text{diam}(B)$
 is the diameter of the bounding box containing the neighboring points. The results are presented in Figure~\ref{fig:composite}.  

To illustrate the behavior of our quantities of interest, namely  
the native-space norm of each interpolant, and the squared discrete $L_2$-norm of the difference between successive interpolants,
Figure~\ref{fig:slope_composite} plots them against the fill distance for this experiment in a log--log scale. We emphasize that these slopes do not represent the final smoothness class, but the quantities denoted in the Algorithm \ref{alg:local_smoothness_detection} by $\tilde{\gamma}_\mathcal{N}$ and $\tilde{\beta}_2$,
from which the final smoothness $\beta$ can be infered directly.

\begin{figure}[]
    \centering
    \begin{subfigure}{0.32\textwidth}
        \centering
        \includegraphics[width=0.88\linewidth]{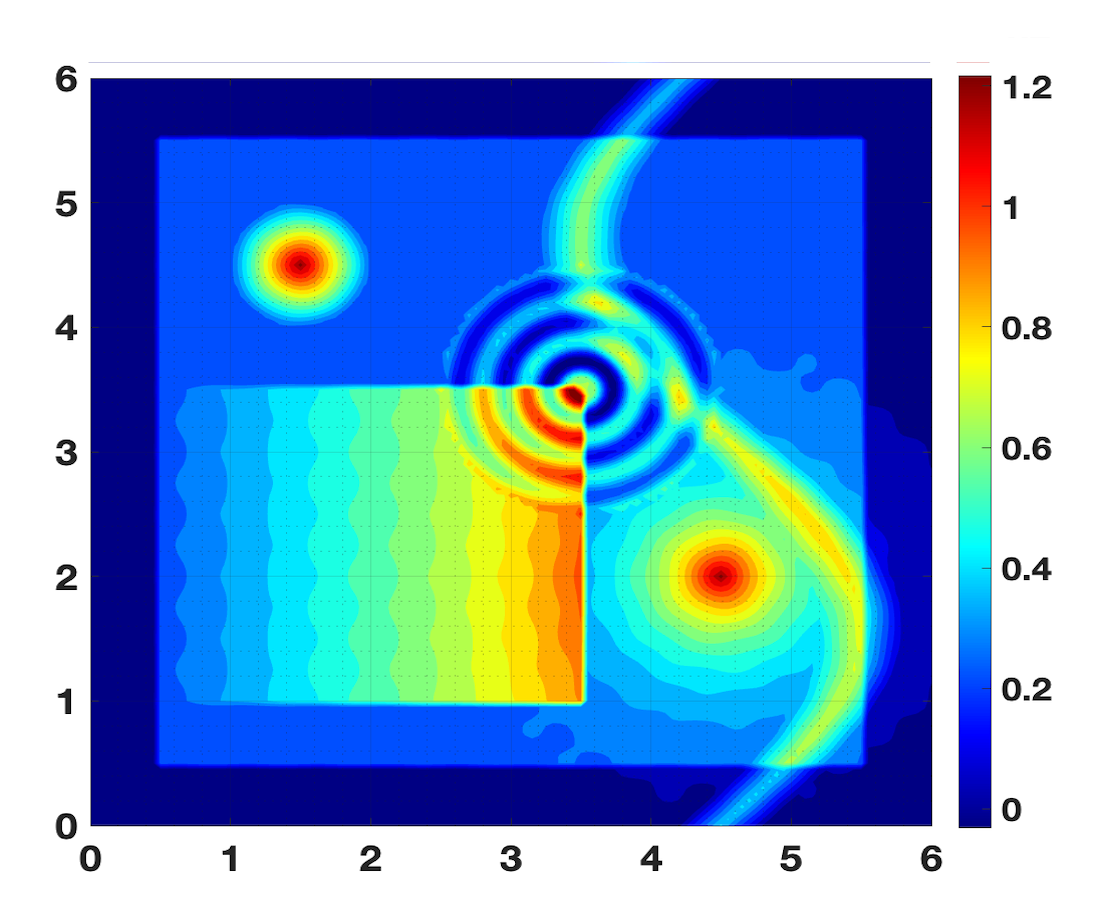}
        \caption{}
        \label{fig:composite_function}
    \end{subfigure}\hfill
    \begin{subfigure}{0.32\textwidth}
        \centering
        \includegraphics[width=0.95\linewidth]{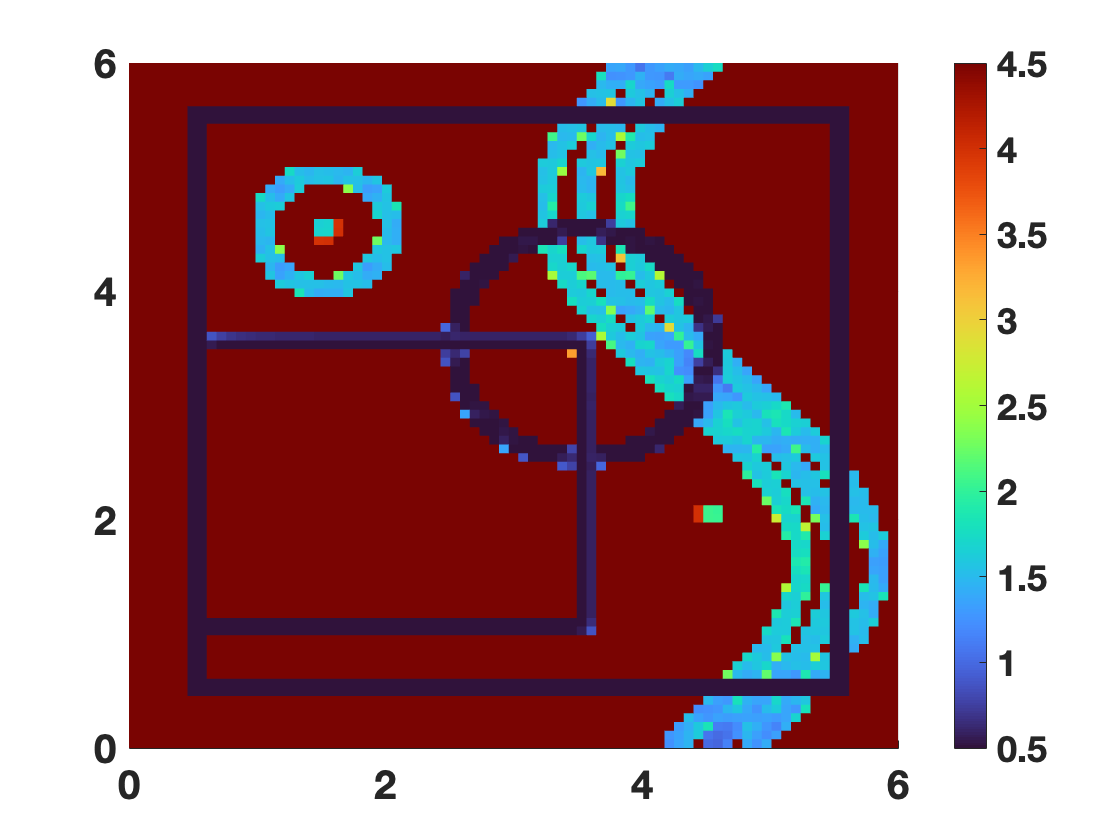}
        \caption{}
        \label{fig:composite_function_uniform_native}
    \end{subfigure}\hfill
    \begin{subfigure}{0.32\textwidth}
        \centering
        \includegraphics[width=0.95\linewidth]{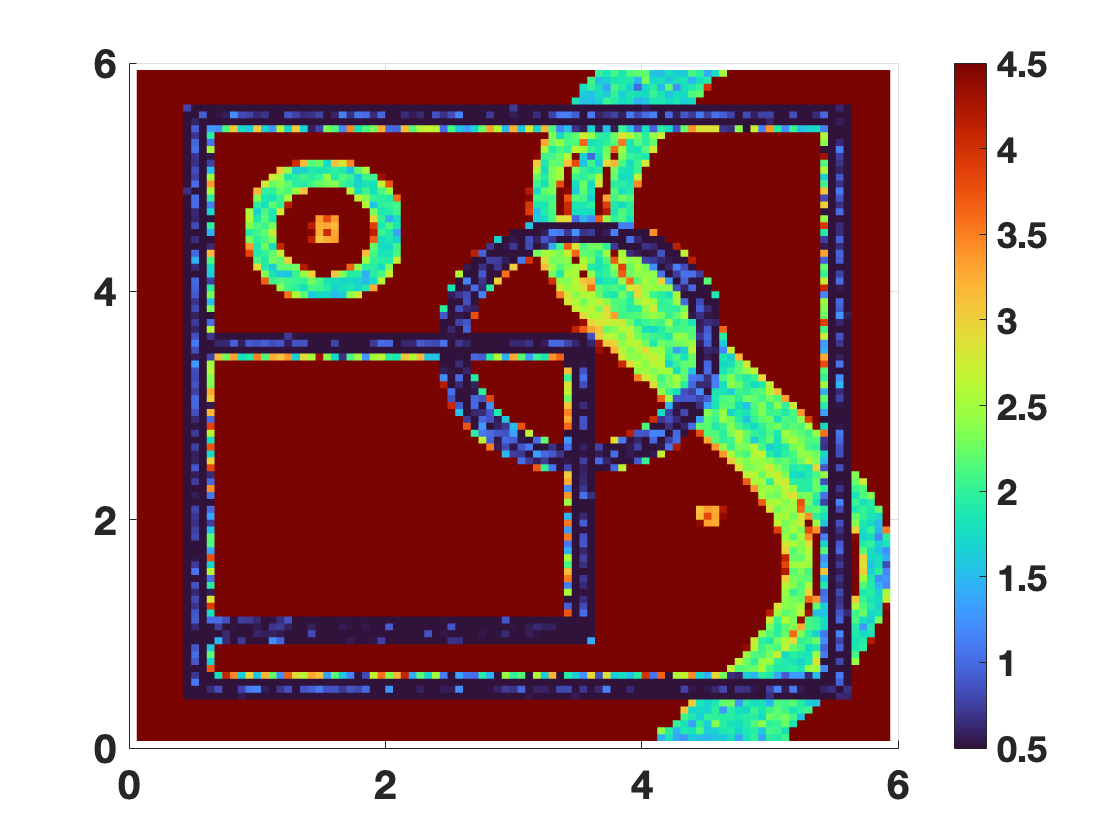}
        \caption{}
        \label{fig:composite_function_native}
    \end{subfigure}

    \vspace{2ex}

    \begin{subfigure}{0.32\textwidth}
        \centering
        \includegraphics[width=0.95\linewidth]{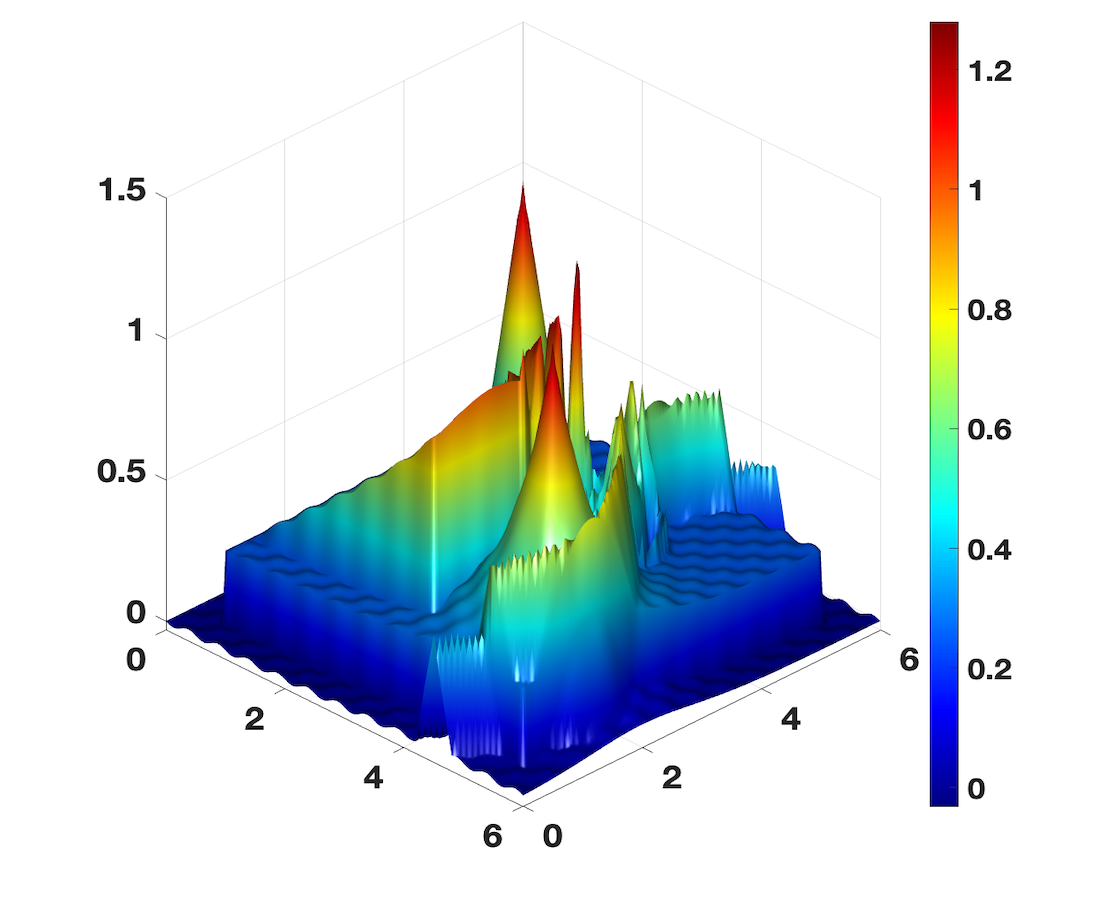}
        \caption{}
        \label{fig:composite_function2}
    \end{subfigure}\hfill
    \begin{subfigure}{0.32\textwidth}
        \centering
        \includegraphics[width=0.95\linewidth]{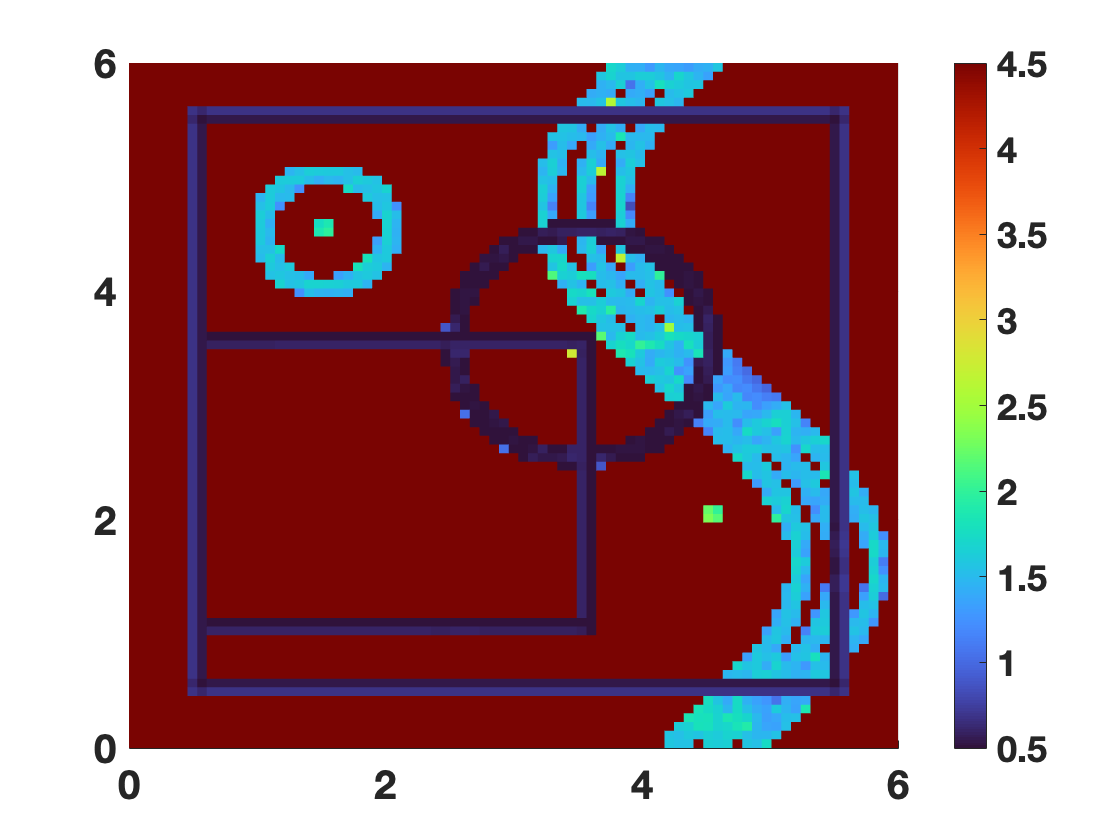}
        \caption{}
        \label{fig:composite_function_uniform_2}
    \end{subfigure}\hfill
    \begin{subfigure}{0.32\textwidth}
        \centering
        \includegraphics[width=0.95\linewidth]{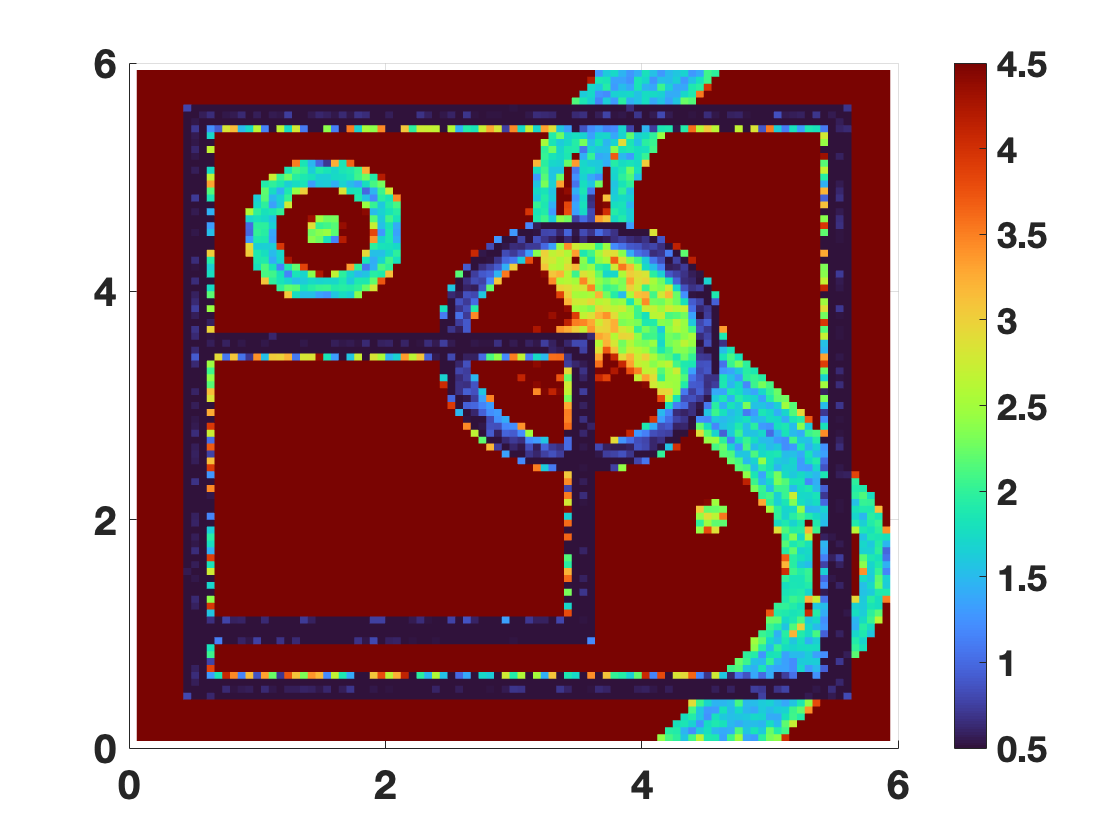}
        \caption{}
        \label{fig:composite_function_2}
    \end{subfigure}

    \caption{Subfigure~\subref{fig:composite_function} shows the 2D view of the test function and Subfigure~\subref{fig:composite_function2} its 3D view. Subfigures~\subref{fig:composite_function_uniform_native} and~\subref{fig:composite_function_uniform_2} display the native-space and $L_2$-norm results obtained with the Fixed Stencil Method on an initial $100\times100$ grid, respectively. Subfigures~\subref{fig:composite_function_native} and~\subref{fig:composite_function_2} show the corresponding results for the Uniform Subsampling approach applied to $200\,000$ Halton points.}
    \label{fig:composite}
\end{figure}
\begin{figure}
  \centering
  \begin{subfigure}{0.6\textwidth}
    \centering
    \includegraphics[width=0.35\linewidth]{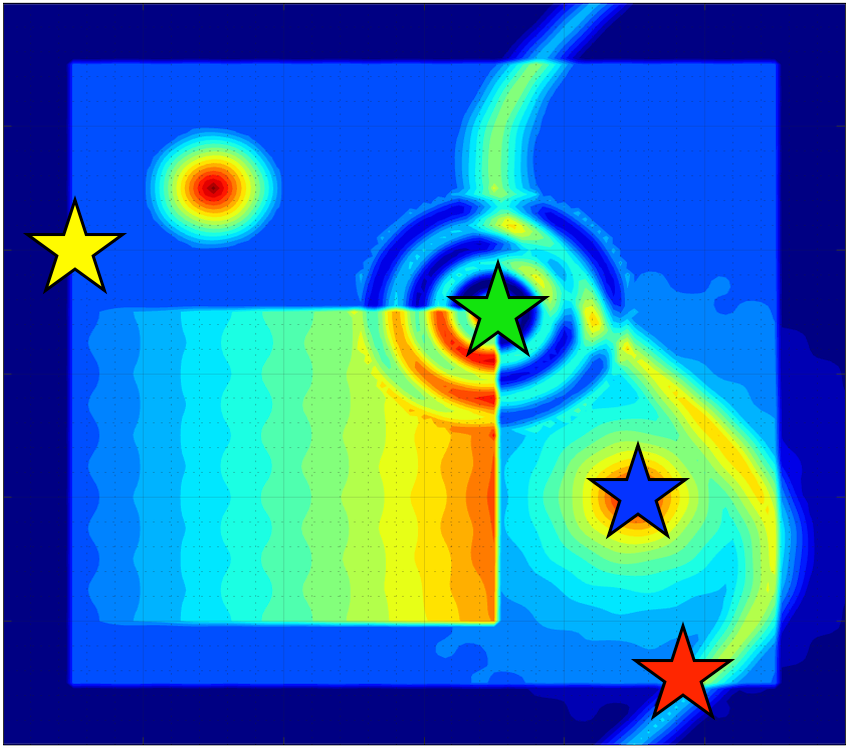}
    \caption{}
    \label{fig:stars}
  \end{subfigure}
  
  \vspace{0.5em}

  \begin{subfigure}[t]{0.4\textwidth}
    \begin{tikzpicture}
      \begin{axis}[width=0.9\linewidth, height=4cm, xmode=log, ymode=log, xlabel={$h_{X_m,\Omega}$}, grid=both, grid style={line width=0.4pt, draw=gray!30}, legend cell align=left, legend pos = north east, tick label style={font=\footnotesize}, label style={font=\small}]
      \addplot+[only marks, mark=*, mark options={draw=black, fill=gold}, mark size=2.5pt] table {figure/native_space_point_140770.dat};
      \addplot+[no marks, color=gold, dashed, thick] table {figure/fit_native_point_140770.dat};
      \addplot+[only marks, mark=*, mark options={draw=black, fill=red}, mark size=2.5pt] table {figure/native_space_point_321.dat};
      \addplot+[no marks, color=red, dashed, thick] table {figure/fit_native_point_321.dat};
      \addplot+[only marks, mark=*, mark options={draw=black, fill=darkgreen}, mark size=2.5pt] table {figure/native_space_point_995.dat};
      \addplot+[no marks, color=darkgreen, dashed, thick] table {figure/fit_native_point_995.dat};
      \addplot+[only marks, mark=*, mark options={draw=black, fill=blue}, mark size=2.5pt] table {figure/native_space_point_175287.dat};
      \addplot+[no marks, color=blue, dashed, thick] table {figure/fit_native_point_175287.dat};
      \end{axis}
    \end{tikzpicture}
    \caption{Plot of $\left\|I_{X_m} f\right\|_{\mathcal{N}_{\Phi}(X_m)}$.}
    \label{fig:stars_native}
  \end{subfigure}%
  \begin{subfigure}[t]{0.4\textwidth}
    \begin{tikzpicture}
      \begin{axis}[width=0.9\linewidth, height=4cm, 
      xmode=log, 
      ymode=log, 
      xlabel={$h_{X_m,\Omega}$}, 
      grid=both, 
      grid style={line width=0.4pt, draw=gray!30}, legend cell align=left, legend pos = north east, tick label style={font=\footnotesize}, label style={font=\small}]
      \addplot+[only marks, mark=*, mark options={draw=black, fill=gold}, mark size=2.5pt] table {figure/data_point_140770.dat};
      \addplot+[no marks, color=gold, dashed, thick] table {figure/fit_point_140770.dat};
      \addplot+[only marks, mark=*, mark options={draw=black, fill=red}, mark size=2.5pt] table {figure/data_point_321.dat};
      \addplot+[no marks, color=red, dashed, thick] table {figure/fit_point_321.dat};
      \addplot+[only marks, mark=*, mark options={draw=black, fill=darkgreen}, mark size=2.5pt] table {figure/data_point_995.dat};
      \addplot+[no marks, color=darkgreen, dashed, thick] table {figure/fit_point_995.dat};
      \addplot+[only marks, mark=*, mark options={draw=black, fill=blue}, mark size=2.5pt] table {figure/data_point_175287.dat};
      \addplot+[no marks, color=blue, dashed, thick] table {figure/fit_point_175287.dat};
      \end{axis}
    \end{tikzpicture}
    \caption{Plot of $\|  I_{X_{m}}f - I_{X_{m-1}}f \|_{L_2(X_M)}^2$.}
    \label{fig:stars_2}
  \end{subfigure}%
  \caption{Regression lines for four evaluation points with different smoothness classes, highlighted in Subfigure~\subref{fig:stars}. Figures~\subref{fig:stars_native} and~\subref{fig:stars_2} show the native space norm and $L_2$-norm of consecutive interpolant differences versus fill distance, with slopes of the fitted dashed lines corresponding to $\tilde{\gamma}_{\mathcal{N}} \approx \beta - \frac{7}{2}$ and $\tilde{\beta}_2 \approx \beta$, respectively.}
  \label{fig:slope_composite}
\end{figure}

Finally, to evaluate our method in a setting that more closely reflects practical applications, we consider a turbulence-like scenario derived from the numerical solution of the Navier--Stokes equations with temperature-dependent density. The computational setup and geometry are taken from the large fluid benchmark described in Section~2.13 of \cite{hecht2020freefem}. 
The dataset consists of approximately $100\,000$ sample points generated by \texttt{FREEFEM++} for this benchmark problem. We apply the Uniform Subsampling algorithm to this data using an $H^{\frac{7}{2}}(\mathbb{R}^2)$ reproducing kernel for the interpolation, with three refinement steps and $100$ nearest neighbours. 
The lengthscale parameter is selected using the same criterion adopted in the previous example. The resulting performance is illustrated in Figure~\ref{fig:turbolence_results}.

\begin{figure}[]
\vspace{0.5 cm}
    \centering
    \begin{subfigure}{0.32\textwidth}
        \centering
        \includegraphics[width=0.95\linewidth]{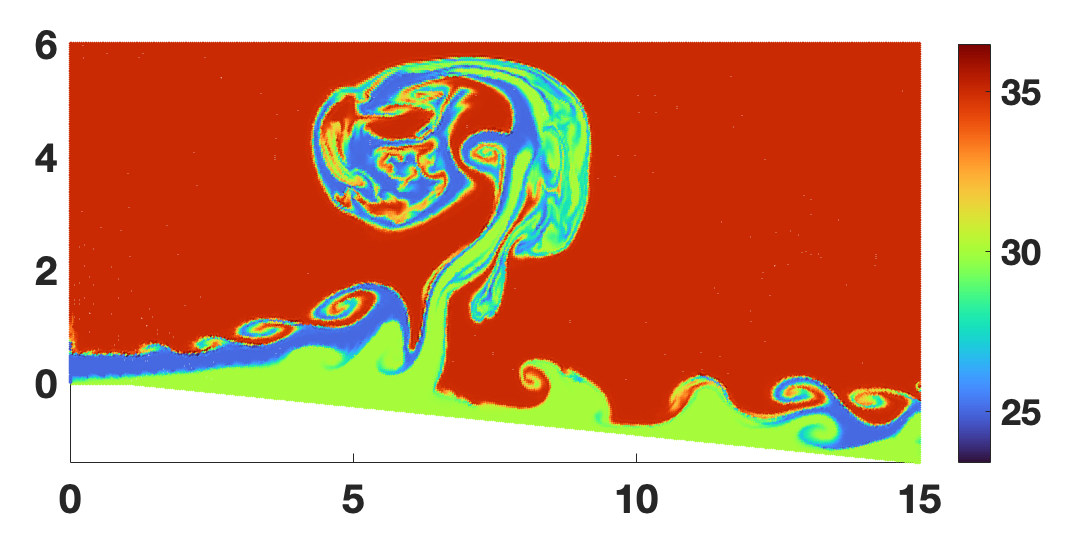}
        \caption{}
        \label{fig:turbolence_function}
    \end{subfigure}\hfill
    \begin{subfigure}{0.32\textwidth}
        \centering
        \includegraphics[width=0.95\linewidth]{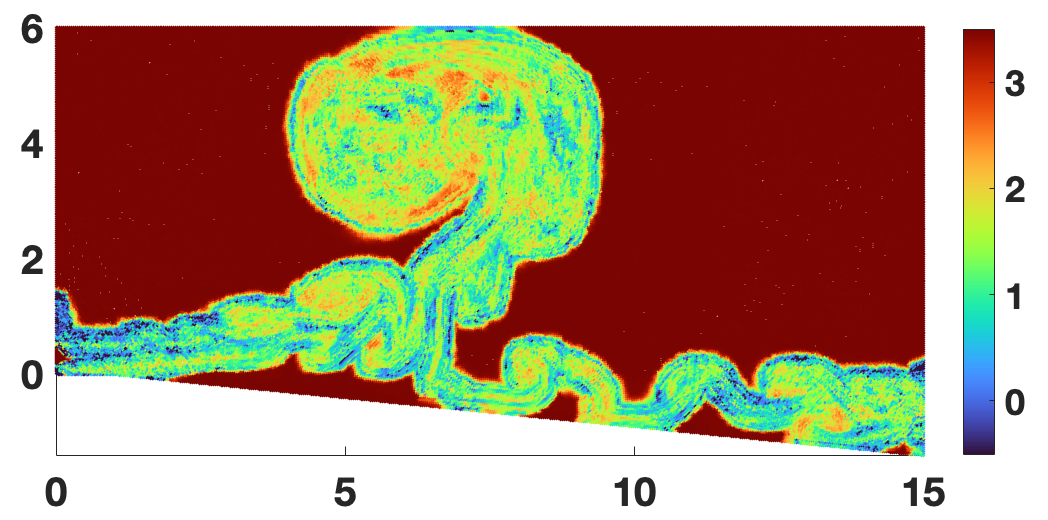}
        \caption{}
        \label{fig:turbolence_native}
    \end{subfigure}\hfill
    \begin{subfigure}{0.32\textwidth}
        \centering
        \includegraphics[width=0.95\linewidth]{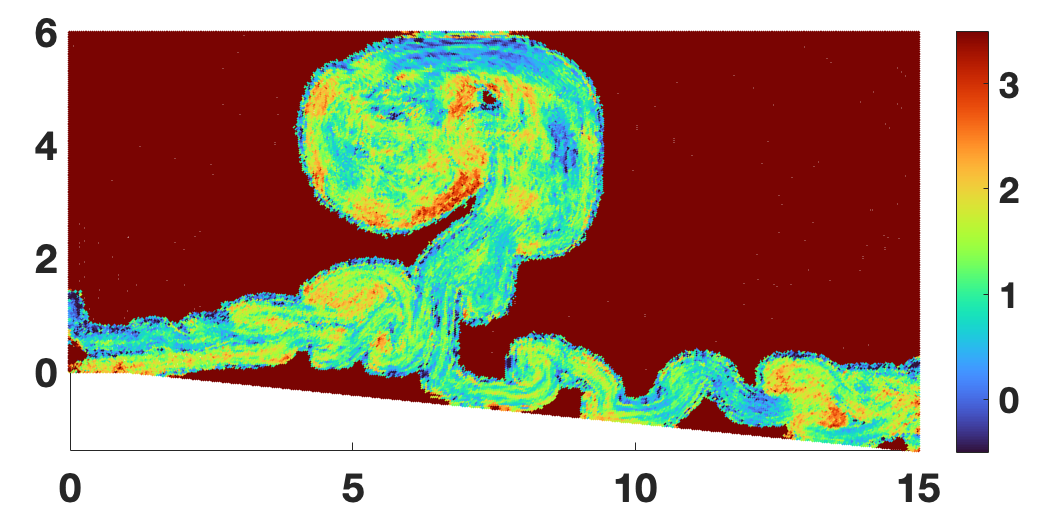}
        \caption{}
        \label{fig:turbolence_l2}
    \end{subfigure}

    \caption{Subfigure~\subref{fig:turbolence_function} displays the turbulence data, while Subfigures~\subref{fig:turbolence_native} and~\subref{fig:turbolence_l2} show the corresponding native-space and $L_2$-norm results obtained using the Uniform Subsampling method with approximately $100\,000$ points and $100$ neighbours.}
    \label{fig:turbolence_results}
\end{figure}

\subsection{Three-dimensional setting.} 
We analyze the following function on the complex volume of the \texttt{Stanford Bunny} from
the Stanford 3D Scanning 
Repository\footnote{\url{https://graphics.stanford.edu/data/3Dscanrep/}}:
\begin{equation}
f(x,y,z) =
\begin{cases}
\displaystyle \frac{4(z - c_z)}{\sqrt{(x - c_x)^2 + (y - c_y)^2 + (z - c_z)^2}} & \text{if } z > \frac{1}{2}\sin(5x + 2y), \\[1em]
1 & \text{otherwise},
\end{cases}
\end{equation}
where $\bs{c} = (c_x, c_y, c_z)$ is a center point located near the paw of the bunny. To avoid division by zero, the center point itself is excluded from the dataset.

Starting from the surface open-source mesh, a volumetric mesh of approximately $300\,000$ points was generated using \texttt{Gmsh}. Interpolation is performed using an $H^{3}(\mathbb{R}^3)$ reproducing kernel. The same computational configuration as in the preceding example is employed: the Uniform Subsampling Algorithm is applied with $300$ neighbors, and the lengthscale kernel parameter set to the global fill distance multiplied by the diameter of the bounding box containing the neighboring points. Finally, 5 refinement steps are performed. The resulting volume is shown in Figure~\ref{fig:bunny}.

\begin{figure}[]
    \begin{subfigure}{0.21\textwidth}
        \includegraphics[width=\linewidth]{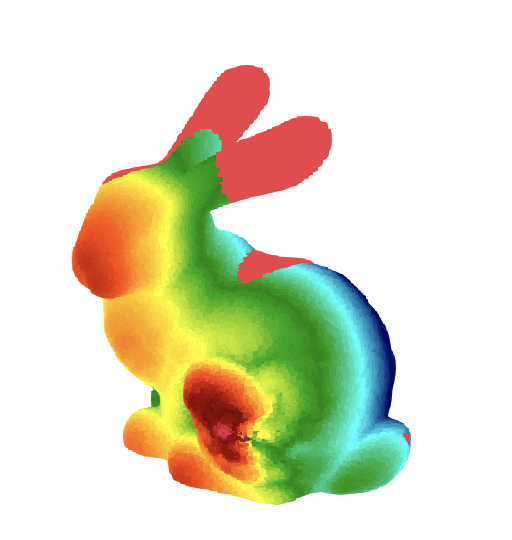}
        \caption{}
        \label{fig:bunny_function}
    \end{subfigure}
    \quad
    \begin{subfigure}{0.21\textwidth}
        \includegraphics[width=\linewidth]{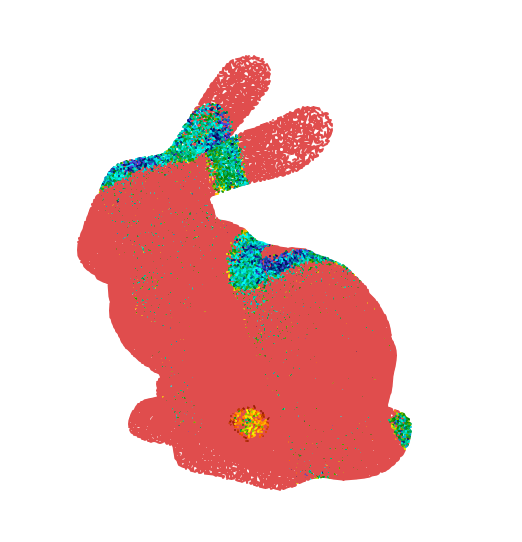}
        \caption{}
        \label{fig:bunny_native}
    \end{subfigure}
    \quad 
    \begin{subfigure}{0.21\textwidth}
        \includegraphics[width=\linewidth]{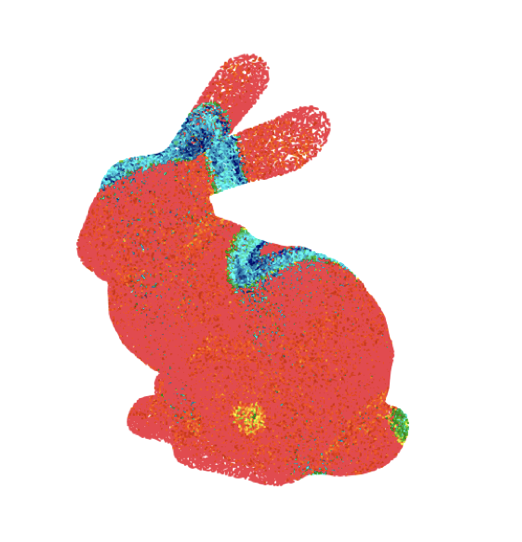}
        \caption{}
        \label{fig:bunny_2}
    \end{subfigure}
    \quad
    \begin{subfigure}{0.1\textwidth}
        \includegraphics[width=0.6\linewidth]{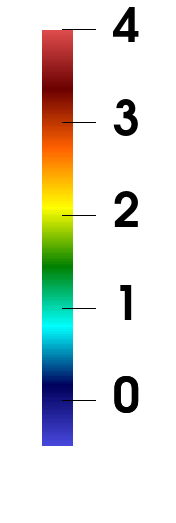}
        \caption*{}
    \end{subfigure}

    \caption{Subfigure~\subref{fig:bunny_function} shows the the test function on the \texttt{Stanford Bunny}, with a sinusoidal jump and a point singularity. Subfigures~\subref{fig:bunny_native} and~\subref{fig:bunny_2} display the native-space and $L_2$-norm results obtained with the Uniform Subsampling approach applied to $300\,000$ points.}
    \label{fig:bunny}
\end{figure}

\section{Conclusion and Outlook}
\label{sec:conclusion_outlook}
This work establishes a rigorous mathematical framework connecting function smoothness to approximation convergence rates through strengthened direct and inverse statements for finitely smooth kernels. Our theoretical contribution is threefold. First, we generalized classical direct error estimates from $L_2(\Omega)$ to arbitrary $L_q(\Omega)$ norms. Then, we strengthened existing results in inverse approximation theory by weakening one of the required hypotheses in previous theoretical results, i.e., we require the existence of a single sequence of quasi-uniformly distributed points in place of uniform decay across all quasi-uniform sets. 
Finally, we established novel direct and inverse statements, which are formulated in the native space norm $\| \cdot \|_{\mathcal{N}_\Phi(\Omega)}$ directly, complementing the standard $L_2(\Omega)$-based theory.

These theoretical statements establish a one-to-one correspondence between function smoothness and approximation rates, creating the foundation for \emph{SALSA} (Sobolev Algorithm for Local Smoothness Analysis). This algorithm detects locally not only the non-smooth regions, but also the smoothness class of the different singularities.
Finally, numerical experiments demonstrate SALSA's effectiveness across univariate and multivariate scattered data. We tested the algorithm on problems ranging from one to three spatial dimensions with datasets containing up to hundreds of thousands points, considering both smooth regions and various singularity types including jump discontinuities, corner singularities, and complex geometric features. In all cases, SALSA accurately identified the local smoothness classes, validating the theoretical predictions.

\bigskip\noindent
{\bf Acknowledgement.}
SA was funded by the SNSF starting grant 
``Multiresolution methods for unstructured data'' (TMSGI2\_211684). LL was supported by the General Research Fund (GRF Nos. 12300922, 12301824) of the Hong Kong Research Grants Council, and by the Guangdong and Hong Kong Universities ``1 + 1 + 1'' Joint Research Collaboration Scheme (Project No. 2025A0505000014).

\bibliographystyle{plain}
\bibliography{bibliography}

\begin{thebibliography}{10}

\bibitem{AlbaniEtAl2016OptimalConvergence}
V.~Albani, P.~Elbau, M.~V. de~Hoop, and O.~Scherzer.
\newblock Optimal convergence rates results for linear inverse problems in
  hilbert spaces.
\newblock {\em Numerical Functional Analysis and Optimization}, 37(5):521--540,
  2016.

\bibitem{arcangeli2007extension}
R.~Arcang{\'e}li, M.~C. L{\'o}pez~de Silanes, and J.~J. Torrens.
\newblock An extension of a bound for functions in {S}obolev spaces, with
  applications to ($m$, $s$)-spline interpolation and smoothing.
\newblock {\em Numerische Mathematik}, 107(2):181--211, 2007.

\bibitem{avesani2025multiresolution}
S.~Avesani, G.~Giacchi, and M.~Multerer.
\newblock Multiresolution local smoothness detection in non-uniformly sampled
  multivariate signals.
\newblock {\em arXiv preprint arXiv:2507.13480}, 2025.

\bibitem{BertasiusShiTorresani2015DeepEdge}
G.~Bertasius, J.~Shi, and L.~Torresani.
\newblock Deepedge: A multi-scale bifurcated deep network for top-down contour
  detection.
\newblock In {\em Proceedings of the IEEE Conference on Computer Vision and
  Pattern Recognition (CVPR)}, pages 4380--4389, 2015.

\bibitem{BissantzHohageMunkRuymgaart2007}
N.~Bissantz, T.~Hohage, A.~Munk, and F.~Ruymgaart.
\newblock Convergence rates of general regularization methods for statistical
  inverse problems and applications.
\newblock {\em SIAM Journal on Numerical Analysis}, 45(6):2610--2636, 2007.

\bibitem{BozziniRossini2013DiscontinuityCurves}
M.~Bozzini and M.~Rossini.
\newblock The detection and recovery of discontinuity curves from scattered
  data.
\newblock {\em Journal of Computational and Applied Mathematics}, 240:148--162,
  mar 2013.

\bibitem{brezis2018gagliardo}
H.~Brezis and P.~Mironescu.
\newblock {G}agliardo--{N}irenberg inequalities and non-inequalities: the full
  story.
\newblock {\em Ann. Inst. H. Poincar{\'e} C, Anal. Non Lin{\'e}aire},
  35(5):1355--1376, 2018.

\bibitem{Canny1986Edge}
J.~F. Canny.
\newblock A computational approach to edge detection.
\newblock {\em IEEE Transactions on Pattern Analysis and Machine Intelligence},
  PAMI-8(6):679--698, 1986.

\bibitem{Chikmurge18}
D.~Chikmurge and S.~Harnale.
\newblock Feature extraction of dicom images using canny edge detection
  algorithm.
\newblock In {\em International Conference on Intelligent Computing and
  Applications}, pages 185--196, Singapore, 2018. Springer Singapore.

\bibitem{CramptonMason2005FaultLines}
A.~Crampton and J.~C. Mason.
\newblock Detecting and approximating fault lines from randomly scattered data.
\newblock {\em Numerical Algorithms}, 39(1--3):115--130, jul 2005.

\bibitem{DeMarchiEtAl2020ShapeDriven}
S.~De~Marchi, W.~Erb, F.~Marchetti, E.~Perracchione, and M.~Rossini.
\newblock Shape-driven interpolation with discontinuous kernels: Error
  analysis, edge extraction, and applications in magnetic particle imaging.
\newblock {\em SIAM Journal on Scientific Computing}, 42(2):B472--B491, 2020.

\bibitem{EtemadChellappa1993NeuralEdge}
K.~Etemad and R.~Chellappa.
\newblock Neural network based edge detector.
\newblock In {\em Proceedings of the IEEE International Conference on Neural
  Networks (ICNN)}, pages 132--137, San Francisco, CA, USA, mar 1993. IEEE.

\bibitem{fasshauer2015kernel}
G.~E. Fasshauer and M.~J. McCourt.
\newblock {\em Kernel-based approximation methods using Matlab}, volume~19.
\newblock World Scientific Publishing Company, 2015.

\bibitem{FloresVidalEtAl2019GlobalEdgeDetection}
P.~A. Flores-Vidal, G.~Villarino, D.~Gómez, and J.~Montero.
\newblock A new edge detection method based on global evaluation using
  supervised classification algorithms.
\newblock {\em International Journal of Computational Intelligence Systems},
  12(1):367--378, jan 2019.

\bibitem{Fornberg10}
B.~Fornberg and N.~Flyer.
\newblock The {G}ibbs phenomenon for radial basis functions.
\newblock Technical report, University of Colorado Boulder, Department of
  Applied Mathematics, 2010.

\bibitem{griebel2025kernel}
M.~Griebel, H.~Harbrecht, and M.~Multerer.
\newblock Kernel interpolation on sparse grids.
\newblock {\em arXiv preprint arXiv:2505.12282}, 2025.

\bibitem{GuastavinoBenvenuto2020SpectralConvergence}
S.~Guastavino and F.~Benvenuto.
\newblock Convergence rates of spectral regularization methods: A comparison
  between ill-posed inverse problems and statistical kernel learning.
\newblock {\em SIAM Journal on Numerical Analysis}, 58(6):3504--3529, 2020.

\bibitem{HangelbroekEtAl2018Inverse}
T.~Hangelbroek, F.~Narcowich, C.~Rieger, and J.~Ward.
\newblock An inverse theorem for compact lipschitz regions in $\mathbb{R}^d$
  using localized kernel bases.
\newblock {\em Mathematics of Computation}, 87(312):1949--1989, 2018.

\bibitem{harbrecht2022samplets}
H.~Harbrecht and M.~Multerer.
\newblock Samplets: Construction and scattered data compression.
\newblock {\em J. Comput. Phys.}, 471:111616, 2022.

\bibitem{HeEtAl2019BDCN}
J.~He, S.~Zhang, M.~Yang, Y.~Shan, and T.~Huang.
\newblock Bi-directional cascade network for perceptual edge detection.
\newblock In {\em Proceedings of the IEEE/CVF Conference on Computer Vision and
  Pattern Recognition (CVPR)}, pages 3828--3837, 2019.

\bibitem{hecht2020freefem}
Frédéric Hecht.
\newblock {\em FreeFEM Documentation, Release 4.6}.
\newblock Université Pierre et Marie Curie, November 2020.

\bibitem{iske2025convergence}
A.~Iske.
\newblock On the convergence of irregular sampling in reproducing kernel
  hilbert spaces.
\newblock {\em arXiv preprint arXiv:2504.13623}, 2025.

\bibitem{Jerri98}
A.~J. Jerri.
\newblock {\em The Gibbs Phenomenon in Fourier Analysis, Splines and Wavelet
  Approximations}, volume 446 of {\em Mathematics and Its Applications}.
\newblock Springer Science+Business Media Dordrecht / New York, NY, 1 edition,
  1998.

\bibitem{Jing22}
J.~Jing, S.~Liu, G.~Wang, W.~Zhang, and C.~Sun.
\newblock Recent advances on image edge detection: A comprehensive review.
\newblock {\em Neurocomputing}, 503:259--271, 2022.

\bibitem{jung2009iterative}
J.-H. Jung and V.~R. Durante.
\newblock An iterative adaptive multiquadric radial basis function method for
  the detection of local jump discontinuities.
\newblock {\em Applied Numerical Mathematics}, 59(7):1449--1466, 2009.

\bibitem{jung2011iterative}
J.-H. Jung, S.~Gottlieb, and S.~O. Kim.
\newblock Iterative adaptive {RBF} methods for detection of edges in
  two-dimensional functions.
\newblock {\em Applied Numerical Mathematics}, 61(1):77--91, 2011.

\bibitem{kittler1983accuracy}
J.~Kittler.
\newblock On the accuracy of the sobel edge detector.
\newblock {\em Image and Vision Computing}, 1(1):37--42, 1983.

\bibitem{gia2006continuous}
Q.~T. Le~Gia, F.~J. Narcowich, J.~D. Ward, and H.~Wendland.
\newblock Continuous and discrete least-squares approximation by radial basis
  functions on spheres.
\newblock {\em Journal of Approximation Theory}, 143(1):124--133, 2006.
\newblock Special Issue on Foundations of Computational Mathematics.

\bibitem{LenarduzziSchaback2017KernelAdaptive}
L.~Lenarduzzi and R.~Schaback.
\newblock Kernel-based adaptive approximation of functions with
  discontinuities.
\newblock {\em Applied Mathematics and Computation}, 307:113--123, mar 2017.

\bibitem{LiPaluriRehgDollar2016UnsupervisedEdges}
Y.~Li, M.~Paluri, J.~M. Rehg, and P.~Doll{\'a}r.
\newblock Unsupervised learning of edges.
\newblock In {\em Proceedings of the IEEE Conference on Computer Vision and
  Pattern Recognition (CVPR)}, pages 1619--1627, 2016.

\bibitem{MallatZhong1992MultiscaleEdges}
S.~Mallat and S.~Zhong.
\newblock Characterization of signals from multiscale edges.
\newblock {\em IEEE Transactions on Pattern Analysis and Machine Intelligence},
  14(7):701--732, 1992.

\bibitem{MAT}
B.~Mat{\'e}rn.
\newblock {\em Spatial variation}, volume~36 of {\em Lecture Notes in
  Statistics}.
\newblock Springer, Berlin, second edition, 1986.

\bibitem{narcowich2006sobolev}
F.~Narcowich, J.~Ward, and H.~Wendland.
\newblock {S}obolev {E}rror {E}stimates and a {B}ernstein {I}nequality for
  {S}cattered {D}ata {I}nterpolation via {R}adial {B}asis {F}unctions.
\newblock {\em Constructive Approximation}, 24(2):175--186, 2006.

\bibitem{romani2019edge}
L.~Romani, M.~Rossini, and D.~Schenone.
\newblock Edge detection methods based on {RBF} interpolation.
\newblock {\em J. Comput. Appl. Math.}, 349:532--547, 2019.

\bibitem{schaback2002inverse}
R.~Schaback and H.~Wendland.
\newblock Inverse and saturation theorems for radial basis function
  interpolation.
\newblock {\em Mathematics of Computation}, 71(238):669--681, 2002.

\bibitem{Shrivakshan12}
G.~T. Shrivakshan and C.~Chandrasekar.
\newblock A comparison of various edge detection techniques used in image
  processing.
\newblock {\em International Journal of Computer Science Issues (IJCSI)},
  9(5):269, 2012.

\bibitem{zhengjie2025inverse}
Z.~Sun and L.~Ling.
\newblock Inverse inequalities for kernel-based approximation on bounded
  domains and riemannian manifolds.
\newblock {\em arXiv preprint arXiv:2508.05376}, 2025.

\bibitem{ward2012lp}
J.~P. Ward.
\newblock {$L^p$} {B}ernstein inequalities and inverse theorems for {RBF}
  approximation on $\mathbb{R}^d$.
\newblock {\em Journal of Approximation Theory}, 164(12):1577--1593, 2012.

\bibitem{wendland2004scattereddata}
H.~Wendland.
\newblock {\em Scattered data approximation}.
\newblock Cambridge Monographs on Applied and Computational Mathematics.
  Cambridge University Press, Cambridge, 2004.

\bibitem{wenzel2025sharp}
T.~Wenzel.
\newblock Sharp inverse statements for kernel interpolation.
\newblock {\em Mathematics of Computation}, 2025.

\bibitem{WilliamThomas15}
H.~M. WilliamThomas and S.~C.~P. Kumar.
\newblock A review of segmentation and edge detection methods for real time
  image processing used to detect brain tumour.
\newblock In {\em 2015 IEEE International Conference on Computational
  Intelligence and Computing Research (ICCIC)}, pages 1--4, 2015.

\bibitem{Zygmund03}
A.~Zygmund.
\newblock {\em Trigonometric Series}.
\newblock Cambridge Mathematical Library. Cambridge University Press,
  Cambridge, UK, 3 edition, feb 2003.
\newblock Foreword by Robert A. Fefferman; originally published 1935; 2nd ed.
  (1959) in 2 volumes.

\end{thebibliography}

\newpage

\end{document}